\pgfplotsset{compat = 1.16}
\newcolumntype{L}{>{$}l<{$}} % math-mode version of "l" column type
\newcolumntype{C}{>{$}c<{$}}
\newcolumntype{R}{>{$}r<{$}}
\newtheorem{example}{Example}
\newtheorem{lemma}{Lemma}
\newcommand*{\rom}[1]{\expandafter\@slowromancap\romannumeral #1@}
\@citea\NAT@hyper@{%
		\NAT@nmfmt{\NAT@nm}%
		\hyper@natlinkbreak{\NAT@aysep\NAT@spacechar}{\@citeb\@extra@b@citeb}%
		\NAT@date}}
\@citea\NAT@nmfmt{\NAT@nm}%
\NAT@spacechar\NAT@hyper@{\NAT@date}}{}{}
\@citea\NAT@hyper@{%
		\NAT@nmfmt{\NAT@nm}%
		\hyper@natlinkbreak{\NAT@spacechar\NAT@@open\if*#1*\else#1\NAT@spacechar\fi}%
		{\@citeb\@extra@b@citeb}%
		\NAT@date}}
\@citea\NAT@nmfmt{\NAT@nm}%
\fi\NAT@hyper@{\NAT@date}}
\tikzstyle{decision} = [diamond, draw, fill=blue!20,
\tikzstyle{block} = [rectangle, draw, fill=blue!20,
\tikzstyle{line} = [draw, -latex']
\tikzstyle{cloud} = [draw, ellipse,fill=red!20, node distance=3cm,
\definecolor{green1}{rgb}{.3, .6, 0}
\begin{document}
\setcounter{page}{1}    % set page to 1 again to start arabic count
\pagenumbering{arabic}

\title{Scheduling a single parallel-batching machine with non-identical job sizes and incompatible job families}

 \author{{\normalsize Fan Yang$^\text{a}$, Morteza Davari$^\text{b}$, Wenchao Wei$^\text{c}$, Ben Hermans$^\text{a}$, Roel Leus$^\text{a}$}\footnote{Corresponding author. E-mail address: Roel.Leus@kuleuven.be, tel.: +32 16 32 69 67, ORCID: 0000-0002-9215-3914.}
	\\{\small\em $^\text{a}$ORSTAT, Faculty of Economics and Business, KU Leuven, Belgium}
	\\{\small\em $^\text{b}$SKEMA Business School, Universit\'{e} C\^{o}te d'Azur, Lille, France}
	\\{\small\em $^\text{c}$School of Economics and Management, Beijing Jiaotong University, China}}
\date{}
\maketitle

\baselineskip 0.8cm
\begin{abstract}\baselineskip 0.8cm
We study the scheduling of jobs on a single parallel-batching machine with non-identical job sizes and incompatible job families.
Jobs from the same family have the same processing time and can be loaded into a batch, as long as the batch size respects the machine capacity.
The objective is to minimize the total weighted completion time.  The problem combines two classic combinatorial problems, namely bin packing and single machine scheduling.
We develop three new mixed-integer linear-programming
formulations, namely
an assignment-based formulation,
a time-indexed formulation (TIF), and a set-partitioning formulation (SPF).
We also propose a column generation (CG) algorithm for the SPF, which is the basis for a branch-and-price (B\&P) algorithm and a CG-based heuristic.
We develop a preprocessing method to reduce the formulation size.
A  heuristic framework based on proximity search is also developed using the TIF\@.
The SPF and B\&P can solve instances with non-unit and unit job durations to optimality with up to 80 and 150 jobs within reasonable runtime limits, respectively. The proposed heuristics perform better than the methods from the literature.
\\
\textit{Keywords}: scheduling, batch processing, non-identical job sizes, incompatible job families, integer programming
\end{abstract}

\section{Introduction}
\label{Sec:Introduction}

Batch processing is a production mode that occurs widely in practice, such as in semiconductor manufacturing~\citep{uzsoy1994,ham2017}, hospital sterilization~\citep{ozturk2014}, and 3D printing~\citep{kucukkoc2019}. Two categories are generally distinguished~\citep{mathirajan2006}, namely serial batching and parallel batching. In serial batching, jobs can be batched on a machine in case they require the same setups; the machine handles jobs in a batch in a serial manner. In parallel batching, multiple jobs can be processed simultaneously on machines by grouping them into batches.
This paper considers parallel batching; for the serial case we refer the reader to \cite{shen2012} and \cite{pei2019}.

This study is motivated by a quality inspection of metering devices in an electricity company. The company provides different types of metering devices to measure electricity consumption in various settings (both for families as well as for factories).
We will refer to a meter type as a job family.
To ensure their accuracy, the meters need testing on a parallel-batching machine before installation, and each meter type has its own testing configuration, such that different types cannot be tested together (incompatibility).
The company does not produce the meters itself, but orders them from third-party suppliers.  Each order (below also referred to as a ``job'') has a specific size (number of meters) and cannot be split: each order needs to be tested in the same test run so that, if a meter has a problem in use, it can be easily traced back to its source and the remainder of the order can be recalled.
Multiple jobs of the same type can be tested together in one batch, as long as their total size respects the machine capacity.
We consider a single parallel-batching machine, which means that all jobs in a batch start and complete processing together. All jobs from one family have the same processing duration.
The objective is to minimize the total weighted completion time. %\textcolor{red}{
Our main motivation for choosing this objective rather than, for example, minimizing the makespan is that inventory costs, which are driven by the jobs' weighted completion times \citep{pinedo2016}, form an important KPI for the electricity company. %}
In the three-field notation of \cite{graham1979}, %\textcolor{red}{
the problem studied in this paper %}
can be denoted as $1|p\text{-}batch,v_i,incompatible|\sum w_iC_i$, where $p\text{-}batch$ refers to parallel batching, $v_i$ is the size of job $i$, $incompatible$ stands for incompatibilities among job families, and $w_i$ and $C_i$ denote the job weight and the completion time of job $i$, respectively.

\cite{dobson2001} study the same problem and refer to it as the ``batch loading and scheduling problem'' (BLSP). Batch loading refers to grouping jobs from the same family into batches, and batch scheduling is the sequencing of the batches to minimize the total weighted completion time.
%\textcolor{red}{
\citeauthor{dobson2001} point out that the problem is NP-hard even if there is only a single family and if all job weights are proportional to the job sizes.
The authors also show that if for every two jobs~$i$ and~$j$ within the same family the job size~$v_i$ is an integer multiple of~$v_j$ whenever~$v_i \geq v_j$, and if the machine capacity and job sizes satisfy additional restrictions, then the problem is polynomially solvable.
\citeauthor{dobson2001} further propose a non-linear mixed-integer formulation, a lower bound, and three heuristics.%}
\cite{azizoglu2001} develop a branch-and-bound (B\&B) algorithm for this problem, which can solve up to 25 jobs in reasonable computation times.
\cite{koh2005} model a bottleneck operation in the production of a multi-layer ceramic capacitor as a BLSP, with objectives of makespan, total completion time, and total weighted completion time.
They design a number of heuristics, including a hybrid genetic algorithm.  In fact, makespan minimization in this setting simply equates with minimizing the number of batches for each family separately, such that minimizing the makespan can be achieved by solving a bin packing problem for each family.  The makespan objective is therefore not the most interesting objective from an optimization viewpoint, because the batching decision can be made via bin packing, while sequencing choices are irrelevant for this objective.
More generally, BLSP can be seen as a suitable combination of two classic combinatorial problems, namely bin packing and single machine scheduling, and in this way this study also adheres to a recent trend in literature in which this type of  combinations is studied (see, for example, \cite{wang2012} or \cite{nip2015}).

In follow-up work on BLSP, \cite{kashan2008}
propose a series of ant colony frameworks embedded with heuristic information, which perform better than the hybrid genetic algorithm of~\cite{koh2005}.
\cite{kempf1998} also include secondary resource constraints related to a burn-in operation from the testing stage in semiconductor industry. They develop  heuristics to minimize makespan and total completion time on a single parallel-batching machine.
%\textcolor{red}{
As the discussion above demonstrates, multiple (meta)heuristics have been developed for the BLSP, while exact solution methods are only rarely considered. In this paper, we introduce a number of mixed-integer programming formulations to solve the BLSP exactly.%}

The BLSP has also been extended from a single machine to parallel machines. \cite{koh2004}, for example, design heuristics, including a genetic algorithm, for minimization of makespan, total completion time, and total weighted completion time. To minimize the makespan, \cite{jia2016} use ant colony optimization to form batches and a multi-fit algorithm~\citep{coffman1978} to schedule the batches, which outperforms  the methods in \cite{koh2004}. \cite{cakici2013} include job release dates and develop a variable neighborhood search scheme.
\cite{ham2017} consider a batch processing problem with job release dates, non-identical job sizes and incompatible families on parallel batching machines. The authors show that, with a fixed time limit, a constraint programming model can provide better solutions than two MILP models and a variable neighborhood search.
Many other scheduling environments and other objectives have also been studied for batch processing;
surveys can be found in \cite{potts2000}, \cite{mathirajan2006}, and \cite{monch2011}.

The main contributions of this paper are in three dimensions. First, we present three new linear formulations for the BLSP: an assignment-based formulation (ABF), a time-indexed formulation (TIF) and a set-partitioning formulation (SPF).\@ Among these formulations, the SPF performs best and can solve up to 150 jobs with unit durations and 80 jobs with non-unit durations within reasonable runtime limits.
Second, a column generation (CG) algorithm is proposed for the SPF, which is the basis of a branch-and-price (B\&P) method and a CG-based heuristic.
Third, a preprocessing method and a proximity search heuristic are developed.

The remainder of this paper is structured as follows.
Section~\ref{sec:Problem statement} contains the problem statement and three formulations.
In Section~\ref{sec:Column generation} we introduce the CG, the B\&P, and a CG-based heuristic.
Section~\ref{sec:Preprocessing} provides a preprocessing method to reduce the formulation size.
In Section~\ref{sec:Proximity search} we describe the proximity search.
The computational results are reported in Section~\ref{sec:Computational results}, and
 in Section~\ref{sec:Conclusions} we give a summary and some conclusions.

%%%%%%%%%%%%%%%%%%%%%%%%%%%%%%%%%%%%%%%%%%%%%%%%%%%%%%%%%%%%%%%%%%%%%%%%%%
\section{Problem statement and linear formulations}
\label{sec:Problem statement}
We first provide some definitions and a problem statement in Section~\ref{subsec:Problem statement}. Subsequently, an assignment-based formulation, a time-indexed formulation and a set-partitioning formulation are introduced in Sections~\ref{subsec:Assignment-based formulation},~\ref{subsec:Time-indexed formulation} and~\ref{subsec:Set-partitioning formulation}, respectively.
\subsection{Definitions and problem statement}
\label{subsec:Problem statement}
We schedule a set $N=\{1,\ldots,n\}$ of $n$ independent jobs partitioned in $m$ job families. Each family $j$ is in the set $\mathcal{F}=\{1,\ldots, m\}$,
and coincides with a set of jobs $N_j\subseteq N$, where $\cup_{j=1}^m N_j=N$ and $N_i \cap N_j = \emptyset$ for all $\{i,j\}\subset \mathcal{F}$.
Each job $i$ has a weight $w_i$, a size $v_i$, and a processing time $p_i$. The processing time is the same for all jobs in each family~$j\in \mathcal{F}$: we have $p_i =q_j$ for all $i\in N_j$.
There is a single parallel-batching machine with capacity $V$, which can process several jobs from the same family simultaneously.
We assume $0\leq v_i\leq V$ for all $i\in N$.
The total size of the jobs in one batch cannot exceed the machine capacity.
The processing time of a batch containing jobs from family $j$ is $q_j$, and the completion time of all jobs in a batch equals the completion time of the batch.
Once the processing of a batch starts, there is no interruption and other jobs cannot be added to the batch before completion. A job cannot be split across batches. The objective is to minimize the total weighted completion time.

\begin{example}
\label{example:problem statement}
\normalfont
Consider an instance with two job families A and B ($m=2$), and six jobs ($n=6$). The machine capacity is $V =  4$. The job data are in Table~\ref{table:Example}. Figure~\ref{figure:example 1} depicts several feasible solutions in a Gantt-chart setup, in which time is the horizontal dimension. Each large box represents the execution of a (possibly empty) batch, while the gray boxes correspond to jobs (identified by their number). Solution~1, for example, starts with the processing of the first batch, which contains jobs 1 and 2. Both families have the same (unit) processing time, so that the completion time of the $k$-th batch simply equals $k$.
The total weight of the non-empty batches is 40, 30, 11, and 10, respectively, so that the objective value of
solution 1 equals $1\times 40 + 2 \times 30 + 3 \times 11 + 4 \times 10 = 173$. Similarly, we obtain the objective values of solutions 2 and 3 as 181 and 274.

\begin{table}[t]
	\centering
	\caption{ Job data for Example~\ref{example:problem statement}}
	\begin{tabular}{ccccc}
		\toprule[1pt]
		job $i$ & family & processing time $p_i$ & weight $w_i$ &size $v_i$ \\ \midrule
		1&A&1&20&1   \\
		2&A&1&20&1   \\
		3&A&1&11&3  \\
		4&A&1&10&3   \\
		5&B&1&10&2   \\
		6&B&1&20&2\\
		\bottomrule[1pt]
	\end{tabular}
	\label{table:Example}
\end{table}

%\noindent	
\pgfmathsetlengthmacro\timeunit{3*6pt}
\pgfmathsetlengthmacro\vertunit{16.18034pt}
\begin{figure}[t]
	\centering
	\begin{tikzpicture}%[font=\scriptsize]
	
	\node at (-2,2) [yshift=-1.6ex, xshift=-1ex, anchor=west]{Solution 1};
	
	\node at (-2,1.1) [yshift=-1.6ex, xshift=-1ex, anchor=west]{Solution 2};
	
	\node at (-2,0.1) [yshift=-1.6ex, xshift=-1ex, anchor=west]{Solution 3};

	% Solution_1
	% Batch 1
	\node at (0.6*\timeunit,1.4) [rectangle, anchor=south west, minimum height = \vertunit, draw, minimum width = 2.4*\timeunit, fill=white!10]{};
	\node at (0.6*\timeunit,1.4) [rectangle, anchor=south west, minimum height = \vertunit, draw, minimum width = 0.6*\timeunit, fill=gray!50]{1};
	\node at (1.2*\timeunit,1.4) [rectangle, anchor=south west, minimum height = \vertunit, draw, minimum width = 0.6*\timeunit, fill=gray!50]{2};

	% Batch 2
	\node at (3.6*\timeunit,1.4) [rectangle, anchor=south west, minimum height = \vertunit, draw, minimum width = 2.4*\timeunit, fill=white!10]{};
	\node at (3.6*\timeunit,1.4) [rectangle, anchor=south west, minimum height = \vertunit, draw, minimum width = 1.2*\timeunit, fill=gray!50]{5};
	\node at (4.8*\timeunit,1.4) [rectangle, anchor=south west, minimum height = \vertunit, draw, minimum width = 1.2*\timeunit, fill=gray!50]{6};
	
	% Batch 3
	\node at (6.6*\timeunit,1.4) [rectangle, anchor=south west, minimum height = \vertunit, draw, minimum width = 2.4*\timeunit, fill=white!10]{};
	\node at (6.6*\timeunit,1.4) [rectangle, anchor=south west, minimum height = \vertunit, draw, minimum width = 1.8*\timeunit, fill=gray!50]{3};
	% Batch 4
	\node at (9.6*\timeunit,1.4) [rectangle, anchor=south west, minimum height = \vertunit, draw, minimum width = 2.4*\timeunit, fill=white!10]{};
	\node at (9.6*\timeunit,1.4) [rectangle, anchor=south west, minimum height = \vertunit, draw, minimum width = 1.8*\timeunit, fill=gray!50]{4};
	% Batch 5
	\node at (12.6*\timeunit,1.4) [rectangle, anchor=south west, minimum height = \vertunit, draw, minimum width = 2.4*\timeunit, fill=white!10]{};
	% Batch 6
	\node at (15.6*\timeunit,1.4) [rectangle, anchor=south west, minimum height = \vertunit, draw, minimum width = 2.4*\timeunit, fill=white!10]{};

	% Solution_2
	% Batch 1
	\node at (0.6*\timeunit,0.5) [rectangle, anchor=south west, minimum height = \vertunit, draw, minimum width = 2.4*\timeunit, fill=white!10]{};
	\node at (0.6*\timeunit,0.5) [rectangle, anchor=south west, minimum height = \vertunit, draw, minimum width = 0.6*\timeunit, fill=gray!50]{1};
	\node at (1.2*\timeunit,0.5) [rectangle, anchor=south west, minimum height = \vertunit, draw, minimum width = 1.8*\timeunit, fill=gray!50]{3};

	% Batch 2
	\node at (3.6*\timeunit,0.5) [rectangle, anchor=south west, minimum height = \vertunit, draw, minimum width = 2.4*\timeunit, fill=white!10]{};
	\node at (3.6*\timeunit,0.5) [rectangle, anchor=south west, minimum height = \vertunit, draw, minimum width = 0.6*\timeunit, fill=gray!50]{2};
	\node at (4.2*\timeunit,0.5) [rectangle, anchor=south west, minimum height = \vertunit, draw, minimum width = 1.8*\timeunit, fill=gray!50]{4};

	% Batch 3
	\node at (6.6*\timeunit,0.5) [rectangle, anchor=south west, minimum height = \vertunit, draw, minimum width = 2.4*\timeunit, fill=white!10]{};
	\node at (6.6*\timeunit,0.5) [rectangle, anchor=south west, minimum height = \vertunit, draw, minimum width = 1.2*\timeunit, fill=gray!50]{5};
	\node at (7.8*\timeunit,0.5) [rectangle, anchor=south west, minimum height = \vertunit, draw, minimum width = 1.2*\timeunit, fill=gray!50]{6};
	
	% Batch 4
	\node at (9.6*\timeunit,0.5) [rectangle, anchor=south west, minimum height = \vertunit, draw, minimum width = 2.4*\timeunit, fill=white!10]{};
	% Batch 5
	\node at (12.6*\timeunit,0.5) [rectangle, anchor=south west, minimum height = \vertunit, draw, minimum width = 2.4*\timeunit, fill=white!10]{};
	% Batch 6
	\node at (15.6*\timeunit,0.5) [rectangle, anchor=south west, minimum height = \vertunit, draw, minimum width = 2.4*\timeunit, fill=white!10]{};

	% Solution_3
	% Batch 1
\node at (0.6*\timeunit,-0.4) [rectangle, anchor=south west, minimum height = \vertunit, draw, minimum width = 2.4*\timeunit, fill=white!10]{};
	\node at (0.6*\timeunit,-0.4) [rectangle, anchor=south west, minimum height = \vertunit, draw, minimum width = 0.6*\timeunit, fill=gray!50]{1};

	% Batch 2
\node at (3.6*\timeunit,-0.4) [rectangle, anchor=south west, minimum height = \vertunit, draw, minimum width = 2.4*\timeunit, fill=white!10]{};
\node at (3.6*\timeunit,-0.4) [rectangle, anchor=south west, minimum height = \vertunit, draw, minimum width = 0.6*\timeunit, fill=gray!50]{2};
	% Batch 3
\node at (6.6*\timeunit,-0.4) [rectangle, anchor=south west, minimum height = \vertunit, draw, minimum width = 2.4*\timeunit, fill=white!10]{};
\node at (6.6*\timeunit,-0.4) [rectangle, anchor=south west, minimum height = \vertunit, draw, minimum width = 1.2*\timeunit, fill=gray!50]{6};
	% Batch 4
\node at (9.6*\timeunit,-0.4) [rectangle, anchor=south west, minimum height = \vertunit, draw, minimum width = 2.4*\timeunit, fill=white!10]{};
\node at (9.6*\timeunit,-0.4) [rectangle, anchor=south west, minimum height = \vertunit, draw, minimum width = 1.8*\timeunit, fill=gray!50]{3};
	% Batch 5
\node at (12.6*\timeunit,-0.4) [rectangle, anchor=south west, minimum height = \vertunit, draw, minimum width = 2.4*\timeunit, fill=white!10]{};
\node at (12.6*\timeunit,-0.4) [rectangle, anchor=south west, minimum height = \vertunit, draw, minimum width = 1.8*\timeunit, fill=gray!50]{4};
	% Batch 6
\node at (15.6*\timeunit,-0.4) [rectangle, anchor=south west, minimum height = \vertunit, draw, minimum width = 2.4*\timeunit, fill=white!10]{};
\node at (15.6*\timeunit,-0.4) [rectangle, anchor=south west, minimum height = \vertunit, draw, minimum width = 1.2*\timeunit, fill=gray!50]{5};
	
	\end{tikzpicture}
	\caption{Three feasible solutions for Example~\ref{example:problem statement}}
	\label{figure:example 1}
\end{figure}
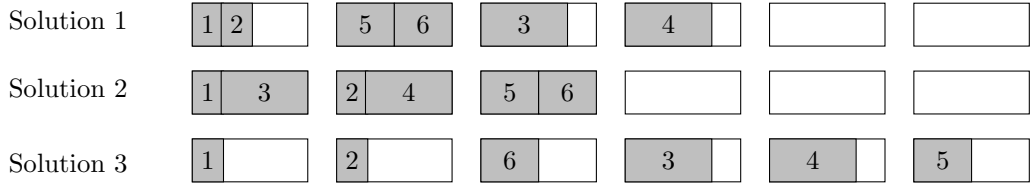

This example illustrates that an intuitive two-stage approach to BLSP consisting in first running a bin packing routine per family (where a bin equates with a batch) and then sequencing the batches, can miss the optimum: despite the lower number of batches, solution 2 is not as good as solution 1.  In fact, solution 1 is an optimal solution for this instance.  The three solutions are all three optimal for the given batching choice; the remaining sequencing problem is optimally solved following the weighted shortest processing time (WSPT) rule \citep{smith1956various}, which in this case simply puts the batches with largest total weight first.  This shows that an algorithm for this problem needs to coordinate the batching and sequencing decisions.

Moreover, solving the problem for each family separately and then combining the resulting batches into a single schedule by using the WSPT rule can also be sub-optimal. Combining jobs~1 and~3 in a first batch and jobs~2 and~4 in a second batch, for instance, constitutes an optimal solution when considering family~A only; and combining jobs~5 and~6 is also trivially optimal when considering family~B only. Applying the WSPT rule to these batches, however, leads to the sub-optimal solution~2.
Conversely, adopting the batching choices for one family for the joint problem with two families can also be sub-optimal for the instance where only one family is considered.
%if we apply the WSPT rule to the batches of one family from the optimal solution, it possible produces a sub-optimal solution when considering that family only.
For example, the job combinations of family A in solution 1 cannot form an optimal solution when only considering family A.
\end{example}

In previous literature, \cite{dobson2001} and~\cite{koh2005} both describe a non-linear formulation for BLSP\@.  \cite{ham2017} study the generalization of BLSP to parallel machines,and present a linear assignment-based formulation as well as a time-indexed formulation, where big-M-type constraints are used to obtain a linear formulation.
Below, we give a linear assignment-based and time-indexed formulation that avoid such big-M constraints. Additionally,
we describe a set-partitioning formulation that exploits the link between BLSP and the bin packing problem~\citep{delorme2016}.

%%%%%%%%%%%%%%%%%%%%%%%%%%%%%%%%%%%%%%%%%%%%%%%%%%%%%%%%%%%%%%%%%%%%%%%%%%
\subsection{Assignment-based formulation}
\label{subsec:Assignment-based formulation}
We first propose an assignment-based formulation.
Let batch~$k$ be in the $k$-th position of the sequence.  We work with a set of batches $\Omega=\{1,\ldots,n\}$.
Let the $(n+1)$-th batch be a dummy batch and define the set $\Omega^*= \Omega \cup \{n+1\}$ .
Clearly, there are at most $n$ non-empty batches in a solution.
We introduce binary variables~$x_{ik}$, which are one if job~$i$ is assigned to one of the batches~$k,k+1,\ldots,n$, and zero if job~$i$ is assigned to one of the batches~$1,2, \ldots,k-1$, with $i\in N$ and $ k\in \Omega^*$. Thus, job $i$ is in batch $k$ if $x_{ik}-x_{i(k+1)}=1$. We also introduce binary variables~$y_{jk}$, which are one if jobs of family $j$ are assigned to batch~$k$, and zero otherwise, for $j\in \mathcal{F}$, $k\in \Omega$. Let $\phi_i$ denote the family of job $i$, i.e., $\phi_i=j$ if $i \in N_j$. We define the continuous variables $u_k \in[0,1]$, which will be one if one or more jobs are assigned to batch~$k$, and  zero otherwise, for all $k\in \Omega$. Finally, $C_i$ is the completion time of job $i\in N$.
\begin{align}
\label{obj:Step}
\min\ &\displaystyle \sum_{i \in N} w_iC_i\\
s.t.\ & \displaystyle \sum_{j \in \cal{F}} y_{jk} = 1,  && \forall k \in \Omega, \label{cons:Step_01} \\
& \displaystyle \sum_{k \in \Omega} y_{jk} = |N_j|,  && \forall j \in \cal{F}, \label{cons:Step_02} \\
& x_{i1} = 1, &&\forall i\in N,  \label{cons:Step_03}\\
& x_{i(n+1)} = 0, &&\forall i\in N,  \label{cons:Step_04}\\
%&\displaystyle \sum_{k\in \Omega} \left(x_{ik} - x_{i(k+1)}\right)=1, &&\forall i\in N, \label{cons:Step_05}\\
& x_{ik} \ge x_{i(k+1)}, && \forall i\in N,  k\in\Omega, \label{cons:Step_06}\\
& y_{\phi_i,k} \ge x_{ik} - x_{i(k+1)},  && \forall i \in N, k \in \Omega, \label{cons:Step_07}\\
&\displaystyle \sum_{i\in N} v_i \left(x_{ik} - x_{i(k+1)}\right)\leq V,   &&\forall k\in\Omega, \label{cons:Step_08}\\
& \displaystyle C_i \ge \sum_{k \in \Omega} \sum_{j \in \mathcal{F}} q_{j} x_{ik} y_{jk},   && \forall i \in N, \label{cons:Step_09}\\
&u_k \geq x_{ik} - x_{i(k+1)},  && \forall i\in N,  k\in\Omega, \label{cons:Step_10}\\
&\displaystyle u_k \leq \sum_{i\in N} \left(x_{ik} - x_{i(k+1)}\right),&& \forall k \in \Omega, \label{cons:Step_11}\\
&u_k \geq u_{k+1}, && \forall k \in \Omega\setminus \{n\}. \label{cons:Step_12}
\end{align}
In the above formulation, the set of constraints~\eqref{cons:Step_01} imposes that each batch be associated with exactly one family. Constraints~\eqref{cons:Step_02} assign $|N_j|$ batches to each family.
Constraints \eqref{cons:Step_03}--\eqref{cons:Step_06} ensure that each job is assigned to exactly one batch. Constraints~\eqref{cons:Step_07} associate batch $k$ to family $\phi_i$ if job $i$ has been assigned to batch~$k$.  Constraints~\eqref{cons:Step_08} impose the capacity constraint on each batch, and constraints \eqref{cons:Step_09} compute the job completion times.
Constraints~\eqref{cons:Step_10} ensure that $u_k = 1$ if at least one job~$i$ is assigned to batch~$k$. Constraints~\eqref{cons:Step_11} make sure  that if no job is assigned to batch $k$, then $u_k=0$.
Constraints~\eqref{cons:Step_12} guarantee that no empty batch exists between two used batches.  The constraints \eqref{cons:Step_10}--\eqref{cons:Step_12} are not strictly necessary, but help to restrict the solution space.

Note that in the above formulation, the term $ x_{ik} y_{jk} $ in constraints~\eqref{cons:Step_09} is not linear. The formulation can be linearized by introducing the auxiliary variables $\tau_{ikj}\in [0, 1]$. With these, constraints~\eqref{cons:Step_09} are replaced by the following constraints:
\begin{align}
& \displaystyle C_i \ge \sum_{k \in \Omega}\sum_{j \in \cal{F}} q_{j} \tau_{ikj},  && \forall i \in N, \label{cons:Step_13} \\
& \tau_{ikj} \le x_{ik},  && \forall i\in N,  k\in\Omega,j\in\cal{F}, \label{cons:Step_14} \\
& \tau_{ikj} \le y_{jk},   && \forall i\in N,  k\in\Omega,j\in\cal{F}, \label{cons:Step_15} \\
& \tau_{ikj} \ge x_{ik} +  y_{jk} -1,  && \forall i\in N,  k\in\Omega,j\in\cal{F}.  \label{cons:Step_16}
\end{align}

%%%%%%%%%%%%%%%%%%%%%%%%%%%%%%%%%%%%%%%%%%%%%%%%%%%%%%%%

%%%%%%%%%%%%%%%%%%%%%%%%%%%%%%%%%%%%%%%%%%%%%%%%%%%%%%%
\subsection{Time-indexed formulation}
\label{subsec:Time-indexed formulation}

Based on the work by \cite{cakici2013} and \cite{ham2017}, we obtain an adapted TIF for BLSP which contains `big-M' type constraints; this formulation, henceforth referred to as TIFM, is given in Appendix~\ref{Appendix:TIF2017}.
In this subsection we present a new TIF, which will turn out to perform better in our computational experiments.
We define the time horizon $H=\{1,\ldots,H_{max}\}$, where $H$ contains $H_{max}=\sum_{i\in N}p_i$ time periods and time period $t$ runs from time $t-1$ to $t$.
We use binary variables $x_{it}$, which are one if the processing of job $i$ starts at time period $t$, and zero otherwise, for $i\in N$ and $t \in H_{\phi_i}$, where $H_j=\{1,\ldots,H_{max}-q_j+1\}$ is the set of relevant
starting periods for family $j$.  The binary variables $y_{jt}$ are 1 if jobs in family $j$ start processing at time period $t$, and zero otherwise, for $t \in H_j$ and $j\in \cal{F}$.
\begin{align}
\label{obj:newTIF}
\min \ &\displaystyle \sum_{i \in N} w_i \sum_{t\in H_{\phi_i}} x_{it}(t+p_i-1)\\
s.t. \ & \sum_{t \in H_{\phi_i}} x_{it} =1,
&&\forall i \in N,\label{cons:NewTIF_01}\\
&\sum_{j \in \mathcal{F}} \sum_{\tau=\max\{t-q_j+1,1\}}^{\min\{t, |H_j|\}}  y_{j\tau} \leq 1,
&&\forall t \in H,\label{cons:NewTIF_02}\\
&\sum_{i\in N_j} x_{it}v_i \leq Vy_{jt},
&&\forall t \in H_j,j\in \cal{F}.\label{cons:NewTIF_03}
\end{align}
The objective~(\ref{obj:newTIF}) is to minimize the total weighted completion time.
Constraints~(\ref{cons:NewTIF_01}) ensure each job is scheduled exactly once.
Constraints~(\ref{cons:NewTIF_02}) make sure that, at each time period $t$, there is only one job family that can be handled.
Constraints~(\ref{cons:NewTIF_03}) verify whether each batch respects the machine capacity.

A variant TIFV is adapted from~\cite{unal2019} and will also be tested in Section~\ref{sec:Computational results}; it replaces the constraints~(\ref{cons:NewTIF_03}) of TIF by
\begin{align}
&\sum_{i\in N} \sum_{\tau=\max\{t-p_i+1,1\}}^{\min\{t, |H_{\phi_i}|\}} x_{i\tau}v_i \leq V,
&&\forall t \in H,\label{cons:NewTIF_08}\\
&y_{jt} \geq x_{it},
&&\forall  i \in N_j, t\in H_j, j\in \text{$\cal{F}$}.\label{cons:NewTIF_09}
\end{align}
Constraints~(\ref{cons:NewTIF_08}) impose the machine capacity in each time period, while constraints~(\ref{cons:NewTIF_09}) ensure the relationship between family $j$ and its job $i$ at time period $t$.

\subsection{Set-partitioning formulation}
\label{subsec:Set-partitioning formulation}
We generate a set of batches (patterns) $S_j = \{1,\ldots,|S_j|\}$ for each family $j$, and $\cup_{j=1}^m S_j=S$. Each batch~$s$ consists of a subset $J_s \subseteq N_j$ of jobs for which $\sum_{i\in J_s} v_i \le V$. The value $ w_{st} = \sum_{i\in J_s} w_i(t+p_i-1)$ is the cost of batch $s\in S_j$ if it starts at time period $t$.

As before, we consider the time horizon $H=\{1,\ldots,H_{max}\}$. %, where $H$ contains $H_{max}=\sum_{i\in N}p_i$ time periods and time period $t$ runs from time $t-1$ to $t$.
Each batch $s\in S_j$ of family $j$ has a set $H_s=\{1,\ldots,H_{max}-q_j+1\}$ of potential start periods.
Let variable $z_{st}$ be one if batch $s$ starts in time period $t$, and zero otherwise, for $t\in H_s$, $s \in S$.
\begin{align}
\label{obj:SPF}
\min \ &  \sum_{s \in S}\sum_{t \in H_s} w_{st} z_{st}   \\
s.t. \ & \sum_{j\in \mathcal{F}}\sum_{s \in S_j}\sum_{\tau =\max\{t-q_j+1,1\}  }^{\min\{t,|H_s|\}} z_{s\tau} \le 1, && \forall t \in H,\label{cons:SPF_01} \\
& \sum_{s \in S : i\in J_s} \sum_{t \in H_s} z_{st} = 1, &&\forall i \in N.\label{cons:SPF_02}
\end{align}
The objective is to minimize the total weighted completion time. Constraints~(\ref{cons:SPF_01}) ensure the machine executes at most one batch at each time period, and constraints~(\ref{cons:SPF_02}) schedule each job once.
This formulation can be seen as a Dantzig-Wolfe decomposition of the TIF\@.
Appendix~\ref{Appendix:batch generation} describes a method to generate all feasible batches for a family.

%%%%%%%%%%%%%%%%%%%%%%%%%%%%%%%%%%%%%%%%%%%%%%%%%%%%%%%%%%%%%%%%%%%%%%%%%%

\section{Column generation}
\label{sec:Column generation}

This section considers a CG algorithm with the linear-programming (LP) relaxation of the SPF as a master problem (MP). We discuss each of the main ingredients
of the algorithm below.
\begin{description}
\item[The dual problem] We introduce continuous dual variables $u_t\leq 0$ for each $t\in H$ and $\pi_i$ for each $i\in N$, corresponding to constraints~(\ref{cons:SPF_01}) and~(\ref{cons:SPF_02}), respectively. The dual problem of the LP is then:
\begin{align}
\max \ &  \sum_{t\in H} u_t + \sum_{i\in N} \pi_i  \\
s.t. \ &  \sum_{\tau=t}^{t+q_j-1}u_{\tau} + \sum_{i \in J_s} \pi_i   \le w_{st}, && \forall s\in S_j, t\in H_s, j\in \mathcal{F}. \label{dualconstraint}
\end{align}
The reduced cost of a non-basic variable in the relaxed SPF is $w_{st}-\sum_{i \in J_s} \pi_i- \sum_{\tau=t}^{t+q_j-1}u_{\tau}$. If this value is less than zero, we add a new column.

\item[Decomposition and pricing] In the pricing problem, we check whether a new variable can be added to the MP with negative reduced cost by searching for a family~$j$ and period~$t$ for which the constraint~\eqref{dualconstraint} is violated the most.
This leads to a separate pricing problem for each family~$j$ and period~$t$ as follows:

\begin{align}
\label{obj:Pricing}
\min \ & (t+q_j-1) \sum_{i \in N_j} w_i x_i - \sum_{i \in N_j} \pi_i x_i - \sum_{\tau=t}^{t+q_j-1}u_{\tau}
\\
s.t. \ & \sum_{i \in N_j} x_i v_i \le V, \label{cons:Pricing01} \\
& x_i \in \{0,1\} \label{cons:Pricing02} ,
\end{align}
where binary variable $x_i=1$ if job $i$ is loaded into this batch, 0 otherwise.
The constraint~\eqref{cons:Pricing01} verifies the total machine capacity.  Consequently, each pricing problem for family~$j$ and time period~$t$ is a 0-1 knapsack problem with profit values $\pi_i - (t+q_j-1)  w_i$ for each item $i\in N_j$, which we can solve
using standard dynamic programming~(see \cite{wolsey1998}, Section 5.4.1).

\item[Column selection] In each CG round, many columns might be added (one for each period/family combination).
In order to control the formulation size, we define a parameter $Col\_number$: in each round
we only include the first $Col\_number$ columns with lowest reduced costs (or the total number, if it is less).

\item[Initial feasible solution] We use a successive knapsack (SK) heuristic from~\cite{dobson2001} to generate an initial feasible solution for the restricted MP (RMP)\@. The SK consists of two steps: first, for each family, successively solve knapsack problems for the maximization of job weights, until all jobs are batched (each constructed batch is iteratively removed from $N_j$). Second, sequence all  the batches according to the WSPT rule. Additionally, in order to guarantee feasibility in every step of the B\&P procedure to be described below, we add~$\vert N \vert$ \emph{super columns}: one for every constraint~\eqref{cons:SPF_02}. Each of these columns then has a large coefficient in the objective function, a coefficient equal to one for its associated row in constraints~\eqref{cons:SPF_02}, and a coefficient equal to zero for all other rows.
\end{description}
The overall scheme of the CG procedure is described in Algorithm~\ref{algorithm:Column genreration}.
\begin{algorithm}
	%\setstretch{1}
	\caption{Column generation}
	\begin{algorithmic}[1]
		\State Generate initial columns for RMP
		\Repeat
		\State Solve RMP by an LP solver and obtain the dual solutions $\pi_i$, $i \in N$, and $u_t$,  $t\in H$
		\State Solve the pricing problems
		\State Add up to $Col\_number$ new columns
		\Until{no negative reduced cost is found}
	\end{algorithmic}
	\label{algorithm:Column genreration}
\end{algorithm}

Based on this CG routine, we develop a B\&P algorithm and a CG-based heuristic (CGH)\@. The CGH simply solves the MIP formulation with all the columns added for finding the LP optimum.
For the B\&P, at each node of the B\&B tree, we solve the LP relaxation of the RMP of the SPF and obtain an optimal solution $z^*$.
If $z^*$ is fractional, we need to branch at the corresponding node. For this purpose, we use a hybrid branching framework based on the method of~\cite{ryan1981}.
Firstly, we search for a pair of jobs $i$ and $i'$ with
\begin{equation}
0< \sum_{s \in S' : i,i'\in J_s} \sum_{t \in H_s} z_{st} < 1,
\label{Ryan_Foster}
\end{equation}
where $S'$ is the set of current batches (columns) in the RMP\@.
Then we can create two new subproblems (child nodes) in the B\&B tree. In the first subproblem, we only consider columns with $i$ and $i'$ in the same batch. In the second subproblem, jobs $i$ and $i'$ are assigned to different batches. As a result, the pricing problem will become a knapsack problem with conflicts, which we solve using the B\&B algorithm of~\cite{sadykov2013}. %At time period $t$, if
When family $j$ has no conflicts (yet) among its jobs, we solve the pricing problem by dynamic programming.

If there is no pair of jobs that satisfies equation~\eqref{Ryan_Foster}, we branch directly on a variable $z_{st}$ with a fractional value, and two subproblems with $z_{st}=1$ and $z_{st}=0$ are then created. In the subproblem with $z_{st}=1$ and its child nodes, we will not consider jobs $i\in J_s$ anymore, because they are already decided to be in batch $s$, and the time periods $t$ to $t+q_j-1$ are set to be occupied by batch $s$, where $q_j$ is the processing time of family $j$ if $s\in S_j$, so no further pricing problems are needed for those periods.
%Hence, we will not solve pricing problems from time periods $t$ to $t+q_j-1$ for the subproblem with $z_{st}=1$ and its further child nodes.
For the subproblem with $z_{st}=0$, %we remove the column of $z_{st}$. Besides,
we need to prevent the generation of the same column at time period $t$ for family $j$ if $s\in S_j$.
Consequently, for pricing problems of the subproblem with $z_{st}=0$ and its child nodes, we use a solver to solve the model \eqref{obj:Pricing}--\eqref{cons:Pricing02} with the following additional constraints:
\begin{align}
& \sum_{i\in J_s}x_i \leq |s|-1+(1-y)M,  \\
&\sum_{i \notin J_s}x_i \geq 1-yM,\\
& y \in \{0,1\} ,
\end{align}
at time period $t$ for family $j$.
Variables $y$ is a binary variable and $M$ is a large number.

Example~\ref{example:branching} below shows an instance for which, in the context of our problem, the branching rule of~\cite{ryan1981} fails to find a pair of jobs to branch on.  That is, there might be a fractional solution for the LP relaxation of SPF for which there does not exist a pair of jobs~$i$ and~$i^\prime$ for which inequalities~\eqref{Ryan_Foster}
are satisfied.
The reason is that our SPF might produce duplicate columns that start at different time periods. A related result about the branching rule of~\cite{ryan1981} can be found in~\cite{vance1994}, who study a cutting stock problem.

\begin{example}
	\label{example:branching}
	\normalfont
	Consider an instance with two jobs $i$ and $i'$ with  $w_i=w_{i'}=1$ and $p_i=p_{i'}=1$	, which are from the same family.
	Let $s$ and $s'$ be the batches that contain job $i$ and $i'$, respectively.	
	Let $z^*$ be a fractional optimal solution of the LP relaxed SPF,
	and assume there are only non-zero variables $z_{s1}^*$, $z_{s2}^*$, $z_{s'1}^*$ and $z_{s'2}^*$, which all have the value 0.5.
	The variables $z_{s1}$ and $z_{s'1}$ pertain to a start at time period~1, while the variables $z_{s2}$ and $z_{s'2}$ start at time period 2. The detailed data and Gantt chart are shown in Table~\ref{table:branching} and Figure~\ref{fig:branching}, respectively. One can easily verify that inequalities~\eqref{Ryan_Foster} are not satisfied for jobs~$i$ and~$i'$, and that, despite the fractional solution, the branching rule does not provide a pair of jobs to branch on.
\end{example}
 \begin{figure}[t]
	\begin{minipage}{0.5\linewidth}
		\centering
		\captionof{table}{Data for Example~\ref{example:branching}}
	    \begin{tabular}{cccc}
	    \toprule[1pt]
		\multicolumn{1}{l}{variable} & \multicolumn{1}{l}{jobs } & \multicolumn{1}{l}{weight} & \multicolumn{1}{l}{processing time} \\
		\midrule
		$z_{s1}$      & $i$     & 1     & 1 \\
		$z_{s'1}$     & $i'$    & 1     & 1 \\
		$z_{s2}$      & $i$     & 1     & 1 \\
		$z_{s'2}$     & $i'$    & 1     & 1 \\
		\bottomrule[1pt]
	\end{tabular}
	\label{table:branching}
	\end{minipage}
	\hfill
	\begin{minipage}{0.5\linewidth}
	\pgfmathsetlengthmacro\timeunit{3*6pt}
	\pgfmathsetlengthmacro\vertunit{18.18034pt}
	\centering
	\begin{tikzpicture}
	\node at (0.6*\timeunit,1.4) [rectangle, anchor=south west, minimum height = \vertunit, draw, minimum width = 3*\timeunit, fill=white!10]{$z_{s1}=0.5$};
	
	\node at (3.6*\timeunit,1.4) [rectangle, anchor=south west, minimum height = \vertunit, draw, minimum width = 3*\timeunit, fill=white!10]{$z_{s2}=0.5$};
	
	\node at (0.6*\timeunit,0.5) [rectangle, anchor=south west, minimum height = \vertunit, draw, minimum width = 3*\timeunit, fill=white!10]{$z_{s'1}=0.5$};
	
	\node at (3.6*\timeunit,0.5) [rectangle, anchor=south west, minimum height = \vertunit, draw, minimum width = 3*\timeunit, fill=white!10]{$z_{s'2}=0.5$};
	
	\draw (0.6*\timeunit,0) -- (6.6*\timeunit,0);
	
	\draw (0.6*\timeunit,0) -- (0.6*\timeunit,0.2);
	\draw (3.6*\timeunit,0) -- (3.6*\timeunit,0.2);
	\draw (6.6*\timeunit,0) -- (6.6*\timeunit,0.2);
	
	\node at (2.1*\timeunit,-0.4) {1};
	\node at (5.1*\timeunit,-0.4) {2};
	\end{tikzpicture}
	\caption{Gantt chart for Example~\ref{example:branching}}
	\label{fig:branching}
\end{minipage}
\end{figure}

To choose a pair of jobs for branching, we use the method of~\cite{held2012} as follows. For each pair of jobs $i$ and $i'$, define
\begin{equation}
q(i, i')=\frac{\sum_{s \in S' : i,i'\in J_s}\sum_{t \in H_s}z_{st}}{\frac{1}{2}(\sum_{s \in S' : i\in J_s}\sum_{t \in H_s}z_{st}+\sum_{s \in S' : i'\in J_s}\sum_{t \in H_s}z_{st})}.
\end{equation}
If $q(i, i')$ is close to zero, jobs $i$ and $i'$ are more likely to be batched separately in this fractional solution, while they are mostly batched together when $q(i, i')$ is close to one.
In any one of these two cases, one child node's lower bound will be similar to the parent node's lower bound. Thus, we choose a job pair $i$ and $i'$ for which $q(i, i')$ is closest to 0.5. We first explore the child node with jobs $i$ and $i'$ together, since this increases the probability of obtaining an integer solution. When branching directly on a variable, we choose the variable with its value closest to 0.5, and we explore the child node with $z_{st}=1$ firstly.
The B\&B tree is explored in a depth-first manner.

%%%%%%%%%%%%%%%%%%%%%%%%%%%%%%%%%%%%%%%%%%%%%%%%%%%%%%%%%%%%%%%%%%%%%%%%%%

\section{Preprocessing}
\label{sec:Preprocessing}
This section introduces a preprocessing procedure to decrease the formulation size.
For each family~$j$, at most $|N_j|$ batches are formed.  Obviously, more batches lead to a potentially longer schedule, which increases the formulation size. We therefore set up a preprocessing procedure to obtain an upper bound $B_j\le |N_j|$ on  the required number of batches for each family $j$.

Firstly, for family $j\in \mathcal{F}$,  we order the jobs in non-increasing order of their sizes, and iteratively load them until the remaining machine capacity is not sufficient for the next job. Let $N_j^b$ stand for the number of jobs in the loaded batch. We observe that any subset of $N_j^b$ jobs from set $N_j$ will constitute a batch that respects the machine capacity.
Next, let $B_j = \lceil|N_j|/N_j^b  \rceil$, where $\lceil \cdot \rceil$ is the ceiling function.

\begin{lemma}
In the problem $1|p\text{-}batch,v_i,incompatible|\sum w_iC_i$, there is at least one optimal solution in which the batch number for each family $j\in \mathcal{F}$ is less than or equal to $B_j$.
\end{lemma}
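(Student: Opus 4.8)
The plan is to start from an arbitrary optimal solution and show that, whenever some family $j$ uses strictly more than $B_j$ batches, we can rearrange its jobs into at most $B_j$ batches without increasing the objective. The key structural observation, already recorded in the text preceding the lemma, is that \emph{any} subset of $N_j$ of cardinality at most $N_j^b$ fits in a single batch (since the $N_j^b$ largest jobs fit, a fortiori any $N_j^b$ jobs fit). Hence the real content is purely combinatorial: $|N_j|$ jobs can always be packed into $\lceil |N_j|/N_j^b\rceil = B_j$ batches of size at most $N_j^b$ each, simply by partitioning $N_j$ into $B_j$ groups of at most $N_j^b$ jobs.

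First I would fix an optimal schedule $\sigma$ and suppose, for contradiction-free rearrangement, that family $j$ occupies batches $b_1,\dots,b_r$ (in time order) with $r > B_j$. Since all jobs in $N_j$ have the common processing time $q_j$, removing these $r$ batches from the sequence and reinserting $B_j$ new batches in their place is feasible as long as the $B_j$ new batches respect the machine capacity; the positions formerly occupied by $b_1,\dots,b_r$ can accommodate up to $r > B_j$ batches of duration $q_j$, so there is certainly room. Partition $N_j$ into $B_j$ batches $\hat b_1,\dots,\hat b_{B_j}$, each of size at most $N_j^b$, and — this is the point that keeps the objective from increasing — assign them to the earliest $B_j$ of the $r$ freed-up time slots, discarding the remaining $r-B_j$ slots (which can be closed up, moving later batches of other families earlier, or simply left idle). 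Since each new batch starts no later than one of the old batches of family $j$ it replaces, every job of $N_j$ completes no later than before, and no job outside $N_j$ completes later. Therefore the total weighted completion time does not increase.

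The main thing to be careful about — and what I would treat as the principal obstacle — is arguing the time-feasibility of the reinsertion cleanly: one must make sure that compressing $r$ slots into $B_j$ slots genuinely yields a valid schedule in which nothing is delayed. Because processing times within a family are identical and the machine is single, this is straightforward: the $i$-th new batch $\hat b_i$ ($i \le B_j$) is placed in the time interval originally used by the old batch $b_i$, so its completion time equals the original completion time of $b_i$, and the jobs it contains can only be helped. Once time-feasibility is settled, iterating this argument over all families with too many batches yields an optimal solution in which every family $j$ uses at most $B_j$ batches, which is the claim. A minor degenerate case worth noting is $N_j^b = |N_j|$ (all of $N_j$ fits in one batch), where $B_j = 1$ and the argument collapses the family to a single batch; and $N_j^b \ge 1$ always holds under the assumption $v_i \le V$, so $B_j$ is well defined.
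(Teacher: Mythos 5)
There is a genuine gap in the step where you claim that the objective does not increase. You partition $N_j$ into $B_j$ batches $\hat b_1,\dots,\hat b_{B_j}$ \emph{arbitrarily} (subject only to each group having at most $N_j^b$ jobs) and place $\hat b_i$ in the time slot of the old batch $b_i$. It is true that $\hat b_i$ starts exactly when $b_i$ did, but the jobs in $\hat b_i$ need not be jobs that were in $b_1,\dots,b_i$: a job that originally sat in $b_1$ may land in $\hat b_{B_j}$ and now complete at time $C(b_{B_j})>C(b_1)$. So the assertion ``every job of $N_j$ completes no later than before'' does not follow from your construction, and with an unlucky partition (heavy jobs pushed to the last group) the total weighted completion time genuinely increases. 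The conclusion is still reachable along your route, but you must pick the partition carefully, e.g.\ list the jobs of $N_j$ in non-decreasing order of their original completion times and fill $\hat b_1,\hat b_2,\dots$ greedily with $N_j^b$ jobs each; then a job originally in $b_k$ has position at most $\sum_{l\le k}|b_l|\le k\,N_j^b$ in that list, hence lands in $\hat b_{k'}$ with $k'\le k$, and only then is no job delayed.

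For comparison, the paper avoids the global repartition entirely and argues locally: if family $j$ uses more than $B_j$ batches, a pigeonhole count shows at least two of them contain strictly fewer than $N_j^b$ jobs, and moving one job from the later such batch into the earlier one is capacity-feasible (any $N_j^b$ jobs of $N_j$ fit) and weakly decreases the objective; iterating this exchange yields the claim. That argument sidesteps the ordering issue that trips up your version, at the cost of needing the small pigeonhole observation. Your time-feasibility discussion (identical durations within a family, idle slots left behind, other families unaffected) is fine; the only real defect is the unconstrained partition.
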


\begin{proof}
By contradiction.  Assume that in each optimal solution, there is at least one family $j$ with strictly more than $B_j$ batches. This family $j$ then has at least has two batches $k$ and $k'$ that contain strictly less than $N_j^b$ jobs. Let $k$ be batch with the earliest completion time. At least one job in batch $k'$ can be moved to batch $k$, which means that the total weighted completion time can be improved. This contradicts the hypothesis.
\end{proof}

In a preprocessing step, the value of $|\Omega|$ in ABF can be set to $\sum_{j\in \mathcal{F}}B_j$, and the right-hand side of constraints~(\ref{cons:Step_02}) can be replaced by $B_j$. The value $H_{max}$ in TIF and SPF can be chosen as  $\sum_{j\in \mathcal{F}}B_jq_j$.

%%%%%%%%%%%%%%%%%%%%%%%%%%%%%%%%%%%%%%%%%%%%%%%%%%%%%%%%%%%%%%%
\section{Proximity search}
\label{sec:Proximity search}

Proximity search was first proposed by~\cite{fischetti2014}.  It is a general framework for iteratively refining a given reference solution to an optimization problem. Consider a 0-1 integer program of the general form

\begin{align}
\label{obj:proximity search}
\min \ & \displaystyle c^Tx \notag
\\
s.t.\ &Ax\geq b,\notag  \\
&x_j \in \{0,1\}, j \in \cal{B}, \notag
\end{align}

where $A$ is an $m \times n$ input matrix,  $b$ and $c$ are input vectors of dimension $m$ and $n$, and $\cal{B}$ is the index set of the binary variables. Proximity search improves a given reference solution $\tilde{x}$ iteratively. At each iteration, to improve $\tilde{x}$, an explicit cutoff constraint
\begin{equation}
\label{proximity:hard_cut off}
c^Tx \leq c^T\tilde{x}-\theta
\end{equation}
is added to the original formulation, where $\theta>0$ is a given tolerance that ensures a minimum improvement in the objective. The original objective function is replaced by the proximity function
\begin{equation}
\label{proximity:hard_obj}
\min \displaystyle \triangle (x,\tilde{x}) = \sum_{\tilde{x}_j=0} x_j + \sum_{\tilde{x}_j=1} (1-x_j),
\end{equation}
which is to minimize the Hamming distance between $x$ and $\tilde{x}$. A generic solver can work as a black box to solve the thus-modified formulation in the hope of improving the reference solution within a short Hamming distance.

In this work we apply the version from~\cite{fischetti2014} called ``proximity search with an incumbent".
When providing a reference solution $\tilde{x}$, the cutoff constraint may lead to infeasibility if the tolerance $\theta$ is too aggressive. \cite{fischetti2014} therefore introduce a continuous variable $\omega \geq 0$ and weaken the cutoff constraint~(\ref{proximity:hard_cut off}) to a soft version:
\begin{equation}
\label{proximity:soft_cut off}
c^Tx \leq c^T\tilde{x}-\theta(1-\omega),
\end{equation}
while minimizing
\begin{equation}
\label{proximity:soft_obj}
\triangle (x,\tilde{x}) + M\omega
\end{equation}
instead of objective~(\ref{proximity:hard_obj}), where $M$ is a positive number that is sufficiently large compared to $\triangle (x,\tilde{x})$.
We apply the framework provided by proximity search to the TIF, and refer to it as PS\@. The SK heuristic is used to generate an initial reference solution.
Algorithm~\ref{algorithm:proximity search} gives a basic structure that we implement; more details can be found in~\cite{fischetti2014}.

\begin{algorithm}[h]
	%\setstretch{1}
	\caption{Proximity search (PS)}
	\begin{algorithmic}[1]
		\State Generate a reference solution $\tilde{x}$ by SK
		\Repeat
			\State add cutoff constraint~(\ref{proximity:soft_cut off}) to the model
			\State Replace original objective function by~(\ref{proximity:soft_obj}) to be minimized
			\State Run the modified model on the solver until a termination criterion is reached; let $x^*$ be the best feasible solution found
			\If{$c^Tx^*<c^T\tilde{x}$}
			\State Recenter $\triangle (x,\cdot)$ by setting $\tilde{x}=x^*$
			\Else
			\State update $\theta$	
			\EndIf
		\Until{an overall termination criterion is reached}
	\end{algorithmic}
	\label{algorithm:proximity search}
\end{algorithm}

%%%%%%%%%%%%%%%%%%%%%%%%%%%%%%%%%%%%%%%%%%%%%%%%%%%%%%%%%%%%%%%%%%%%%%%%%%%%%
\section{Computational results}
\label{sec:Computational results}
This section reports on the computational performance of our proposed models and algorithms. We first introduce the experimental setup in Section~\ref{subsec:Experimental setup}, and subsequently, Section~\ref{subsec:Exact methods and LB} presents the computational results for both the exact methods (i.e., ABF, TIF, TIFV, TIFM, SPF, and B\&P) as well as the lower bounds. Section~\ref{subsec:Heuristics}, finally, compares the heuristic performance of CGH, PS, and a truncated B\&P (tB\&P) with previous methods from the literature.

\subsection{Experimental setup}
\label{subsec:Experimental setup}
To assess the computational performance of our methods, we generate three sets of test instances. For each set, the processing times, job weights, and job sizes are drawn randomly from a discrete uniform distribution on a given range $[a, b]$. % to be specified below.
Table~\ref{table:Experimental settings} summarizes the parameter settings for each of the three instance sets.

\begin{table}[h]
	\centering
	\begin{threeparttable}
	\caption{Experimental settings used to generate the sets K2008, K2008u, and H2017 of test instances}
	\begin{tabular}{CCCC}
		\toprule[1pt]
		\text{Set}   & \text{K2008}&\text{K2008u} &\text{H2017} \\
		\midrule
		n     & 20, 40, 60, 80, 100    &120, 150       & 80, 100 \\
		m     & 2, 4, 6, 10  & 2, 4, 6, 10    & 3, 5 \\
		q_j     & [10j, 10j+10] & 1      &  [1, 15]  \\
		w_i     & [1, 10] & [1, 10]      & [1, 10] \\
		v_i   & [1, 10], [2, 4], [4, 8]  & [1, 10], [2, 4], [4, 8]       &  [1, 50] \\
		V     & 10  & 10      & 50 \\
		\bottomrule[1pt]
	\end{tabular}
	\label{table:Experimental settings}
	\end{threeparttable}
\end{table}

The first set of test instances, which we refer to as K2008, is generated analogously to the instances described in~\cite{kashan2008}. For $n \in \{20, 40, \ldots, 100\}$ and~$m \in \{2,4,6,10\}$, the number of jobs in a family equals $|N_j| = \lfloor n/m \rfloor$ for $j \in \{1,2,\ldots, m-1\}$ (with $\lfloor \cdot \rfloor$ the floor function), and $|N_m| = n-\sum_{j=1}^{m-1}|N_j|$. For each family~$j \in \mathcal{F}$ and job $i \in N$, a processing time $q_j$ and a job weight~$w_i$ are drawn randomly from the range $[10j, 10j+10]$ and $[1, 10]$, respectively. Depending on the experimental setting, we draw a job size $v_i$ for job $i \in N$ from the interval $[1, 10]$, $[2, 4]$, or $[4, 8]$. The machine capacity equals $V=10$. For each of the parameter settings, ten instances are generated, leading to a total of 600 instances.
The second set of test instances, which we refer to as K2008u, is generated in exactly the same way as the K2008 set, except that~$n \in \{120, 150\}$ and all families~$j \in \mathcal{F}$ have unit processing times~$q_j = 1$.
The third set of test instances, finally, is referred to as H2017, and is generated analogously to the instances of~\cite{ham2017}. It contains instances with $n \in \{80, 100\}$ and $m \in \{3,5\}$, and the number of jobs $|N_j|$ of each family $j \in \mathcal{F}$ is obtained in the same way as for the K2008 instances. The processing times, job weights and sizes are generated from ranges $[1,15]$, $[1,10]$, and $[1, 50]$, respectively. The machine capacity equals $V=50$. Again, ten instances are generated for each of the experimental settings, leading to a total of~$40$ instances.

All our algorithms are implemented in the C++ programming language, and we use Gurobi 9.0.2 as the LP and MILP solver. The experiments are performed on a computer with an Intel i7-4790 @ 3.60GHz processor using a single thread and 32GB of RAM.

%%%%%%%%%%%%%%%%%%%%%%%%%%%%%%%%%%%%%%%%%%%%%%%%%%%%%%%%%%%%%%%%%%%%%%%%%%%%%%%%%%%%%%%%

%%%%%%%%%%%%%%%%%%%%%%%%%%%%%%%%%%%%%%%%%%%%%%%%%%%%%%%%%%%%%%%%%%%%%%%%%%%%%%%%%%%%%%%%

\subsection{Exact methods and lower bounds}
\label{subsec:Exact methods and LB}

In this subsection, we first look into the tests of ABF, TIF, TIFV, TIFM, SPF, and B\&P on instances with $n=20$ from K2008; the results are depicted in Table~\ref{table:Non-unit_20}.  Based on this table, we choose TIF, SPF and B\&P, which perform better, to test further instances from the set K2008 with $n\in\{40,60,80\}$, and the set K2008u. Finally, we compare the lower bound obtained by the LP relaxation of SPF with previous ones from the literature.

In all these experiments, we set a time limit of 1200 seconds. For SPF, the computational time includes the time for batch generation.
We set the number $Col\_number$ for B\&P as $20\times m$ and $10\times m$ at the root node and non-root nodes, respectively.
To guarantee a fair comparison with B\&P, we use the SK heuristic, which requires only very limited computational effort, to obtain an initial solution for each method.
The preprocessing from Section~\ref{sec:Preprocessing} is applied for each method.
In all tables of this subsection, the average CPU time and the node number for B\&P are only calculated with the instances solved to guaranteed optimality.

Table~\ref{table:Non-unit_20} reports on the computational performance of all the discussed exact solution methods for the K2008 instances with~$n = 20$. For each method, the table contains the average CPU time in seconds as well as the number of instances solved within the time limit.  We find that:
\begin{itemize}
	\setlength\itemsep{0.1cm}
	\item SPF and B\&P display the best performance since these methods can solve all K2008 instances with $n = 20$ to guaranteed optimality within the time limit. Moreover, B\&P requires significantly less time to solve these small instances.
	\item ABF can solve the least instances of all solution methods. Its performance is especially poor when the job size~$v_i\in [4, 8]$.
	\item Among the time-indexed formulations, TIF outperforms TIFV and TIFM.\@ One possible explanation for this is that, for TIFV, constraints~(\ref{cons:NewTIF_09}) increase the model size and slow down the computation time. The relatively poor performance of TIFM, in turn, can be explained by the big-M constraints.
\end{itemize}

\begin{table}[t]
	\small
	\setlength{\tabcolsep}{4pt}	
	\centering	
	\begin{threeparttable}	
	\caption{Computational performance of exact solution methods for K2008 instances with $n = 20$; `time' indicates the average CPU times in seconds, `opt' the number of solved instances (out of 10) within the 1200-seconds time limit, and `nodes' the average number of nodes computed by B\&P}
	\begin{tabular}{ccRRRRRRRRRRRRR}
		\toprule[1pt]
		& & \multicolumn{2}{c}{ABF} & \multicolumn{2}{c}{TIF} & \multicolumn{2}{c}{TIFV} & \multicolumn{2}{c}{TIFM} & \multicolumn{2}{c}{SPF} & \multicolumn{3}{c}{B\&P} \\
		\cmidrule(lr){3-4}
		\cmidrule(lr){5-6}
		\cmidrule(lr){7-8}
		\cmidrule(lr){9-10}
		\cmidrule(lr){11-12}
		\cmidrule(lr){13-15}
		$m$ & $v_i$ & \multicolumn{1}{c}{time} & \multicolumn{1}{c}{opt} & \multicolumn{1}{c}{time} & \multicolumn{1}{c}{opt} & \multicolumn{1}{c}{time} & \multicolumn{1}{c}{opt} & \multicolumn{1}{c}{time} & \multicolumn{1}{c}{opt} & \multicolumn{1}{c}{time} & \multicolumn{1}{c}{opt} & \multicolumn{1}{c}{time} & \multicolumn{1}{c}{opt} & \multicolumn{1}{c}{nodes} \\
		\midrule
		
		2 & [1, 10] & 79.40 & 10    & 20.87 & 10    & 104.22 & 10    & 63.13 & 10    & 2.53  & 10    & \bf{0.79}  & \bf{10}    & 11.60 \\
		4 & [1, 10] & 257.02 & 6     & 35.55 & 10    & 171.68 & 10    & 410.64 & 9     & 1.98  & 10    & \bf{0.75}  & \bf{10}    & 2.80 \\
		6 & [1, 10] & 554.03 & 5     & 67.32 & 10    & 188.54 & 9     & 267.84 & 9     & 3.86  & 10    & \bf{1.25}  & \bf{10}    & 1.80 \\
		10 & [1, 10] & 204.09 & 7     & 23.76 & 10    & 336.64 & 10    & 361.13 & 9     & 4.34  & 10    & \bf{1.60}  & \bf{10}    & 1.00 \\ \midrule
		2 & [2, 4] & \bf{0.83}  & \bf{10}    & 7.13  & 10    & 21.61 & 10    & 240.06 & 6     & 3.80  & 10    & 14.66 & 10    & 258.20 \\
		4 & [2, 4] & 2.73  & 10    & 104.48 & 10    & 243.37 & 9     & 37.29 & 10    & \bf{2.54}  & \bf{10}    & 3.18  & 10    & 37.60 \\
		6 & [2, 4] & 4.53  & 10    & 39.89 & 9     & 102.20 & 9     & 11.61 & 10    & 2.23  & 10    & \bf{1.06}  & \bf{10}    & 4.40 \\
		10 & [2, 4] & 3.28  & 10    & 15.71 & 10    & 8.68  & 10    & \bf{0.61}  & \bf{10}    & 2.04  & 10    & 1.03  & 10    & 1.00 \\ \midrule
		2 & [4, 8] & 116.41 & 10    & 6.45  & 10    & 173.85 & 10    & 76.44 & 10    & 1.67  & 10    & \bf{0.71}  & \bf{10}    & 9.40 \\
		4 & [4, 8] & 768.34 & 4     & 20.21 & 9     & 289.73 & 6     & 187.82 & 10    & 1.56  & 10    & \bf{0.68}  & \bf{10}    & 1.60 \\
		6 & [4, 8] & \text{---} & 0     & 14.39 & 10    & 255.51 & 10    & 191.14 & 10    & 2.76  & 10    & \bf{1.14}  & \bf{10}    & 1.00 \\
		10 & [4, 8] & 108.10 & 2     & 35.78 & 10    & 468.22 & 4     & 335.19 & 3     & 4.87  & 10    & \bf{1.96}  & \bf{10}    & 1.00 \\
		\midrule
		\multicolumn{2}{l}{\text{average/total}} & 132.17 & 84    & 32.67 & 118   & 178.88 & 107   & 164.93 & 106   & 2.85  & 120   & \bf{2.40}  & \bf{120}   & 27.62 \\
		\bottomrule[1pt]
	\end{tabular}
	\label{table:Non-unit_20}
	\end{threeparttable}	
\end{table}

Given the superior performance of TIF, SPF, and B\&P, we focus only on these methods in the remainder of this subsection. Table~\ref{table:Overall} summarizes the outcome of these experiments on instances from K2008 ($n\in\{20,40,60,80\}$) and K2008u ($n\in\{120,150\}$), where the results are split up according to the number of jobs~$n$, the job size~$v_i$, and the number of families~$m$. For more detailed computational results, we refer to Tables~\ref{table:Non-unit}-\ref{table:Unit} in Appendix~\ref{Appendix:Experimental data}. We find that:

		\begin{table}[t]
		\centering
		\caption{Computational performance of TIF, SPF, and B\&P for K2008 and K2008u instances, split up according to number of jobs $n$, size of jobs $v_i$, and number of families $m$; `time' indicates the average CPU times in seconds, `opt' the number of solved instances (out of 120, 240 and 180, respectively) within the 1200-seconds time limit}
		\begin{threeparttable}
		\small
		\setlength{\tabcolsep}{4pt}	
		
				\begin{subtable}[b]{1\linewidth}
				
				\begin{tabular}{cRRRRRR}
					\toprule[1pt]
				 & \multicolumn{2}{c}{TIF} & \multicolumn{2}{c}{SPF} & \multicolumn{2}{c}{B\&P} \\
					\cmidrule(lr){2-3}
					\cmidrule(lr){4-5}
					\cmidrule(lr){6-7}
					\multicolumn{1}{c}{$n$}& \multicolumn{1}{c}{time} & \multicolumn{1}{c}{opt} & \multicolumn{1}{c}{time} & \multicolumn{1}{c}{opt } & \multicolumn{1}{c}{time } & \multicolumn{1}{c}{opt} \\
					\midrule
					\multicolumn{1}{c}{20} & 32.67 & 118   & 2.85  & \bf{120}   & 2.40  & \bf{120} \\
					\multicolumn{1}{c}{40} & 347.37 & 36    & 76.67 & \bf{120}   & 91.05 & 106 \\
					\multicolumn{1}{c}{60} & 589.49 & 8     & 376.45 & \bf{80}    & 301.74 & 65 \\
					\multicolumn{1}{c}{80} & \text{---} & 0     & 531.20 & \bf{24}    & 571.39 & 21 \\
					\multicolumn{1}{c}{120} & 442.85 & 20    & 85.14 & \bf{108}   & 235.27 & 32 \\
					\multicolumn{1}{c}{150} & 79.01 & 3     & 66.48 & \bf{85}    & 310.44 & 17 \\
					\midrule
					\text{total}   &       & 185   &       & \bf{537}   &       & 361 \\
					\bottomrule[1pt]
				\end{tabular}
				\caption{Number of jobs $n$}
				\label{table:Overall_n}
			\end{subtable}
			
		\begin{subtable}[b]{.45\linewidth}
			
			\begin{tabular}{cRRRRRR}
				\toprule[1pt]
				 & \multicolumn{2}{c}{TIF} & \multicolumn{2}{c}{SPF} & \multicolumn{2}{c}{B\&P} \\
				\cmidrule(lr){2-3}
				\cmidrule(lr){4-5}
				\cmidrule(lr){6-7}
				\multicolumn{1}{c}{$m$}& \multicolumn{1}{c}{time} & \multicolumn{1}{c}{opt} & \multicolumn{1}{c}{time} & \multicolumn{1}{c}{opt } & \multicolumn{1}{c}{time } & \multicolumn{1}{c}{opt} \\
				\midrule
				2     & 217.06 & 78    & 121.62 & \bf{115}   & 93.76 & 69 \\
				4     & 151.05 & 39    & 160.97 & \bf{137}   & 142.50 & 86 \\
				6     & 129.13 & 35    & 103.71 & \bf{134}   & 145.32 & 96 \\
				10    & 85.73 & 33    & 114.69 & \bf{151}   & 197.11 & 110 \\
				\midrule
				\text{total}   &       & 185   &       & \bf{537}   &       & 361 \\
				\bottomrule[1pt]
			\end{tabular}
			\caption{Number of families $m$}
			\label{table:Overall_m}
		\end{subtable}
		\hspace{2ex}
		\begin{subtable}[b]{.45\linewidth}
			\begin{tabular}{cRRRRRR}
				\toprule[1pt]
				& \multicolumn{2}{c}{TIF} & \multicolumn{2}{c}{SPF} & \multicolumn{2}{c}{B\&P} \\
				\cmidrule(lr){2-3}
				\cmidrule(lr){4-5}
				\cmidrule(lr){6-7}
				\multicolumn{1}{c}{$v_i$}& \multicolumn{1}{c}{time} & \multicolumn{1}{c}{opt} & \multicolumn{1}{c}{time} & \multicolumn{1}{c}{opt } & \multicolumn{1}{c}{time } & \multicolumn{1}{c}{opt} \\
				\midrule
				\addlinespace[0.15cm]
				$[1, 10]$ & 189.80 & 60    & 131.81 & \bf{187}   & 144.30 & 126 \\
				\addlinespace[0.15cm]
				$[2, 4]$ & 149.57 & 64    & 163.32 & \bf{137}   & 90.13 & 68 \\
				\addlinespace[0.16cm]
				$[4, 8]$ & 150.98 & 61    & 94.98 & \bf{213}   & 179.92 & 167 \\
				\midrule
				\text{total}   &       & 185   &       & \bf{537}   &       & 361 \\
				\bottomrule[1pt]
			\end{tabular}	
			
			\caption{Job size $v_i$}
			\label{table:Overall_s}
			
		\end{subtable}
		\end{threeparttable}
	\label{table:Overall}
\end{table}

\begin{itemize}
	\setlength\itemsep{0.1cm}
	\item SPF performs best overall, as it solves the most instances. As~$n$ increases and~$m$ decreases, however, the performance deteriorates, since the formulation size increases exponentially with the number of jobs in each family. For some instances, the large model size even results in a failure to build the model within the time and memory limits (see also Tables~\ref{table:Non-unit}-\ref{table:Unit} in Appendix~\ref{Appendix:Experimental data}).
	\item B\&P only seems to perform really well for instances with $n = 20$. Yet, it can be a good alternative optimal procedure when the
	SPF runs out of memory.
	\item
	The ``packability'' of jobs, i.e., the degree to which jobs can be batched within the machine capacity, impacts the performance of SPF and B\&P\@. One possible explanation for this is that a higher packability due to the job size (e.g., $v_i \in [2,4]$) implies that there are more possible combinations to form a batch. This, in turn, increases the formulation size, and thus harms  the performance.
	\item TIF manages to outperform SPF and B\&P for some specific settings with low~$m$ and
	easily packable job sizes~$v_i$  (see Tables~\ref{table:Non-unit} and~\ref{table:Unit} in Appendix~\ref{Appendix:Experimental data}), making it an interesting alternative solution method for some scenarios. %This suggests that the
	%TIF could be another option in some specific scenarios instead of SPF and B\&P.
\end{itemize}

 Figure~\ref{fig:Overall_plot} shows the number of K2008 and K2008u instances (out of 720) solved by TIF, SPF, and B\&P as a function of the allocated computation time (in seconds). The figure indicates that for all reported computation times, SPF outperforms B\&P, which in turn performs better than TIF\@. Moreover, for both SPF and B\&P, the number of solved instances increases significantly especially during the first 600 seconds, after which it seems to stagnate. For TIF, on the other hand, the increase occurs slower and more gradually.

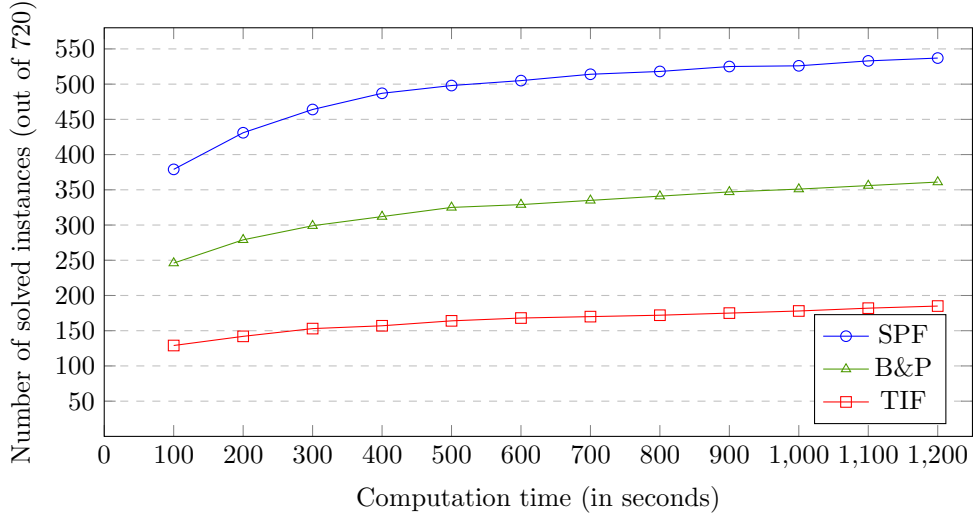
\begin{figure}[t]
	\centering
	\begin{tikzpicture}
		\begin{axis}[ 	
			height=7cm,
			width=13cm,
			xlabel={Computation time (in seconds)},
			ylabel={Number of solved instances (out of 720)},
			xmin=0, xmax=1250, 		
			ymin=0, ymax=580, xtick={0,100,200,300,400,500,600,700,800,900,1000,1100,1200},	
			ytick={50,100,150,200,250,300,350,400,450,500,550},	
			legend pos=south east,
			ymajorgrids=true,	grid style=dashed]
			
			\addplot[color=blue, mark=o,]			coordinates {		(100,379)	(200,431)	(300,464)	(400,487)	(500,498)	(600,505)	(700,514)	(800,518)	(900,525)	(1000,526) (1100,533) (1200,537)	};
			\addlegendentry{SPF}			
			\addplot[color=green1, mark=triangle,]			coordinates {		(100,246)	(200,279)	(300,299)	(400,312)	(500,325)	(600,329)	(700,335)	(800,341)	(900,347)	(1000,351) (1100,356) (1200,361)	};
			\addlegendentry{B\&P}
			\addplot[color=red,	mark=square,]		coordinates {		(100,129)	(200,142)	(300,153)	(400,157)	(500,164)	(600,168)	(700,170)	(800,172)	(900,175)	(1000,178) (1100,182) (1200,185)	};
			\addlegendentry{TIF}
			
		\end{axis}
	\end{tikzpicture}
	
	\caption{Number of K2008 and K2008u instances (out of 720) solved by TIF, SPF, and B\&P as a function of the allocated computation time}
	\label{fig:Overall_plot}
\end{figure}

Table~\ref{table:LB} reports on the quality of the different lower bounds for K2008 instances with $n \in \{20, 40\}$. Here, LBDN, LBAW, and LBKK refer to the lower bounds from \cite{dobson2001}, \cite{azizoglu2001}, and \cite{kashan2008}, respectively. The lower bound obtained by the LP relaxation of SPF is referred to as LBLP\@. For a given lower bound LB and an optimal solution value OPT, we measure the quality of the lower bound by the ratio $\text{LB}/\text{OPT} \times 100\%$. Hence, a higher value means a tighter bound. Table~\ref{table:LB} shows that %the lower bound obtained by the LP relaxation of SPF, i.e.,
 LBLP achieves the best quality, with a ratio that is on average around three to nine percentage points better than the other bounds.

\begin{table}[t]
	\centering
	\setlength{\tabcolsep}{3.5pt}	
	\small
	\begin{threeparttable}
	\caption{Quality of the lower bounds, measured by the ratio $\text{LB}/\text{OPT} \times 100\%$, for K2008 instances with $n \in \{20,40\}$}
	\begin{tabular}{ccRRRRRRRR}
		\toprule[1pt]
		& & \multicolumn{4}{c}{$n=20$} & \multicolumn{4}{c}{$n=40$} \\ \cmidrule(lr){3-6} \cmidrule(lr){7-10}
		$m$ & $v_i$
		&\multicolumn{1}{c}{LBDN}
		&\multicolumn{1}{c}{LBAW}
		&\multicolumn{1}{c}{LBKK}
		&\multicolumn{1}{c}{LBLP}
		&\multicolumn{1}{c}{LBDN}
		&\multicolumn{1}{c}{LBAW}
		&\multicolumn{1}{c}{LBKK}
		&\multicolumn{1}{c}{LBLP} \\
		\midrule
		2 & [1, 10] & 87.98\% & 82.02\%  & 89.79\% & \bf{98.69\%} & 93.23\% & 74.73\%  & 93.39\% & \bf{99.32\%}\\
		4 & [1, 10] & 88.39\% & 84.97\%  & 93.55\% & \bf{99.42\%}& 91.21\% & 77.24\%  & 91.83\% & \bf{98.47\%} \\
		6 & [1, 10] & 86.96\% & 87.24\%  & 94.99\% & \bf{99.47\%} & 88.97\% & 82.40\% & 91.54\% & \bf{97.90\%}\\
		10 & [1, 10] & 88.96\% & \bf{100\%}  & \bf{100\%} & \bf{100\%} & 89.65\% & 89.73\% & 95.38\% & \bf{99.08\%} \\ \midrule
		2 & [2, 4] & 96.33\% & 92.81\%  & 96.33\% & \bf{98.55\%} & 98.70\% & 89.20\% & 98.70\% & \bf{99.29\%} \\
		4 & [2, 4] & 94.75\% & 94.19\%  & 94.75\% & \bf{97.22\%} & 96.95\% & 91.85\%  & 96.95\% & \bf{98.18\%} \\
		6 &  [2, 4] & 93.50\% & \bf{99.01\%}  & 93.50\% & 98.25\% & 95.82\% & 92.42\%  & 95.82\% & \bf{96.86\%} \\
		10 & [2, 4] & \bf{100\%} & \bf{100\%}  & \bf{100\%} & \bf{100\%}& 93.83\% & \bf{98.60\%} & 93.83\% & 96.51\% \\ \midrule
		2 & [4, 8] & 86.88\% & 83.36\%  & 94.97\% & \bf{99.30\%} & 86.70\% & 77.94\%  & 96.91\% & \bf{99.67\%} \\
		4 & [4, 8] & 81.75\% & 81.67\%  & 98.45\% & \bf{99.19\%} & 83.91\% & 79.44\%  & 95.83\% &\bf{99.13\%} \\
		6 & [4, 8] & 79.62\% & 82.25\%  & 99.78\% & \bf{100\%} & 83.96\% & 80.75\%  & 96.13\% & \bf{99.40\%} \\
		10 & [4, 8] & 83.58\% & \bf{100\%} & \bf{100\%} &\bf{100\%} & 82.44\% & 83.97\%  & 98.06\% & \bf{98.47\%} \\
		\midrule
		\multicolumn{2}{c}{average} & 89.06\% & 90.63\%  & 96.34\% & \bf{99.17\%}  & 90.45\% & 84.86\%  & 95.36\% & \bf{98.52\%}\\
		\bottomrule[1pt]
	\end{tabular}
	\label{table:LB}
	\end{threeparttable}
\end{table}

\subsection{Heuristics}
\label{subsec:Heuristics}

In this subsection we first report the results for tB\&P, CGH, and PS on instances with $n\in \{80,100\}$ from set K2008\@. We compare with a hybrid ant colony framework called HACF2, which achieves a better overall performance than previous methods in~\cite{kashan2008}. Subsequently, we compare with a truncated constraint programming (CP) procedure on instances with $n\in \{80,100\}$ from set H2017; the CP model is taken from \cite{ham2017}, who apply CP to a generalization of BLSP for a problem from the semiconductor
industry. % Appendix~\ref{Appendix:CP model} describes how the formulation can be adjusted to our problem setting.

For tB\&P we use the same parameter settings as in Section~\ref{subsec:Exact methods and LB}, and for CGH we set $Col\_number$ equal to $50 \times m$. For PS, %in turn,
we use a tolerance $\theta = \alpha*c^T\tilde{x}$, where $\tilde{x}$ is the reference solution and, initially, $\alpha=0.02$. %\textcolor{red}{
The initial reference solution for PS is produced by the SK heuristic. In each iteration of PS, if the incumbent solution does not improve within a time limit of 100 seconds, then we update $\alpha=\alpha \times 0.5$. We choose $M = 100,000$ in objective function~\eqref{proximity:soft_obj} and %let the MIPGapAbs of Gurobi equal $0.2 \times M$.
set the solver tolerance to $0.2 \times M$ (so the solver halts if the gap between upper and lower bound is less than this value).
 The CP model is solved using the CP Optimizer
of IBM ILOG CPLEX Optimization Studio 12.10, with % the IloCP::Workers set to one. 
one thread. For tB\&P, CGH and PS, the preprocessing described in Section~\ref{sec:Preprocessing} is applied.
Unless mentioned otherwise, we impose a time limit of 600 seconds.

Table~\ref{table:Heu} reports on the performance of CGH, tB\&P, PS, and HACF2 for K2008 instances with $n \in \{80,100\}$. Following~\cite{kashan2008}, we measure the performance of each heuristic by the ratio $\text{OBJ}/\text{LB}$, where OBJ is the objective value provided by the corresponding heuristics and  $\text{LB} = \max\{\text{LBKK},\text{LBAW}\}$. The choice of these latter lower bounds is made to ensure that our results are comparable with the ones of \cite{kashan2008}. Indeed, the results for HACF2 are taken directly from Tables 5 and 6 of~\cite{kashan2008}, while we test our own heuristics on instances generated by ourselves. In~\cite{kashan2008},  HACF2 is implemented in MATLAB 6.5.1 and run on a Pentium \rom{3} 800MHz computer with 128MB RAM using a stopping criterion of 40 generations. Since \cite{kashan2008} did not test instances with $m=10$, we report the computational results for these instances separately in Table~\ref{table:Heu_m=10}.

To compare our heuristics with CP, we apply our methods to the instances from H2017 with $n \in \{80, 100\}$. Table~\ref{table:Heu_CP} displays the results of this experiment, where we measure the performance of each method by the quantity $(\text{OBJ}-\text{LBLP})/\text{LBLP} \times 100\%$.

Tables~\ref{table:Heu}-\ref{table:Heu_CP} yield the following insights:
\begin{itemize}
	\setlength\itemsep{0.1cm}
	\item Table~\ref{table:Heu} shows that PS performs best on average. In particular, for those instances with a low number of families~$m$ or with a high packability in the job size~$v_i$, PS outperforms HACF2 both in terms of solution quality and computation time. As the number of families~$m$ increases, however, HACF2 regularly becomes better than PS.
	\item Table~\ref{table:Heu_CP} shows that PS and CP are the two best performing methods. One possible explanation of the better performance of PS when $m = 3$ is that the size of TIF, which forms the basis of PS, increases with the family number $m$.
\end{itemize}

\begin{table}
	\centering
	\setlength{\tabcolsep}{4pt}
	\begin{threeparttable}
	\caption{Ratio $\text{OBJ}/\text{LB}$ for the heuristics tB\&P, CGH, PS, and HACF2, and average CPU times in seconds for HACF2 (a 600-second time limit was used for all other heuristics) for K2008 instances with $n \in \{80, 100\}$ and $m \in \{2,4,6\}$}
		\begin{tabular}{ccRRRRRRRRRR}
			\toprule[1pt]
			& & \multicolumn{5}{c}{$n = 80$} & \multicolumn{5}{c}{$n = 100$} \\
			\cmidrule(lr){3-7} 			\cmidrule(lr){8-12}
			& & \multicolumn{1}{c}{tB\&P} & \multicolumn{1}{c}{CGH} & \multicolumn{1}{c}{PS} & \multicolumn{2}{c}{HACF2} & \multicolumn{1}{c}{tB\&P} & \multicolumn{1}{c}{CGH} & \multicolumn{1}{c}{PS} & \multicolumn{2}{c}{HACF2} \\
			\cmidrule(lr){3-3}
			\cmidrule(lr){4-4}
			\cmidrule(lr){5-5}
			\cmidrule(lr){6-7}
			\cmidrule(lr){8-8}
			\cmidrule(lr){9-9}
			\cmidrule(lr){10-10}
			\cmidrule(lr){11-12}
			$m$ & $v_i$ & \multicolumn{1}{c}{ratio} & \multicolumn{1}{c}{ratio} & \multicolumn{1}{c}{ratio} & \multicolumn{1}{c}{ratio} & \multicolumn{1}{c}{time} & \multicolumn{1}{c}{ratio} & \multicolumn{1}{c}{ratio} & \multicolumn{1}{c}{ratio} & \multicolumn{1}{c}{ratio} & \multicolumn{1}{c}{time} \\
			\midrule
			2 & [1, 10] & 1.0477 & 1.0508 & \bf{1.0444} & 1.0515 & 1514.2 & 1.0424 & 1.0505 & \bf{1.0396} & 1.0488 & 2862.0 \\
			4 & [1, 10] & \bf{1.0675} & 1.0765 & 1.0701 & 1.0684 & 657.7 & 1.0622 & 1.0699 & \bf{1.0611} & 1.0645 & 1358.3 \\
			6 & [1, 10] & \bf{1.0861} & 1.0956 & 1.0898 & 1.0887 & 454.7 & 1.0890 & 1.0965 & 1.0837 & \bf{1.0774} & 903.3 \\
			2 & [2, 4] & 1.0067 & 1.0067 & \bf{1.0060} & 1.0229 & 1219.5  & 1.0061 & 1.0061 & \bf{1.0049} & 1.0220 & 2396.3 \\
			4 & [2, 4] & 1.0203 & 1.0201 & \bf{1.0155} & 1.0260 & 571.0 & 1.0163 & 1.0163 & \bf{1.0139} & 1.0269 & 1160.4 \\
			6 & [2, 4] & 1.0325 & 1.0325 & \bf{1.0274} & 1.0330 & 403.0  & 1.0250 & 1.0257 & \bf{1.0199} & 1.0319 & 760.8 \\
			2 & [4, 8] & 1.0266 & 1.0268 & \bf{1.0258} & 1.0271 & 1063.4  & 1.0256 & 1.0275 & 1.0261 & \bf{1.0207} & 2003.0 \\
			4 & [4, 8] & 1.0378 & 1.0391 & 1.0410 & \bf{1.0348} & 524.5 & 1.0346 & 1.0360 & 1.0363 & \bf{1.0343} & 993.1 \\
			6 & [4, 8] & \bf{1.0483} & 1.0501 & 1.0499 & 1.0522 & 391.5 & \bf{1.0418} & 1.0505 & 1.0439 & 1.0562 & 716.0 \\
			\midrule
			\multicolumn{2}{l}{average} & 1.0415 & 1.0442 & \bf{1.0411} & 1.0450 & 755.5 & 1.0381 & 1.0421 & \bf{1.0366} & 1.0425 & 1461.5 \\
			\bottomrule[1pt]
		\end{tabular}
	\label{table:Heu}
	\end{threeparttable}
\end{table}

\begin{table}
	\centering
	\begin{threeparttable}
	\caption{Ratio $\text{OBJ}/\text{LB}$ for the heuristics tB\&P, CGH, and PS for K2008 instances with $n \in \{80, 100\}$ and $m = 10$}
	\begin{tabular}{cRRRRRR}
		\toprule[1pt]
		& \multicolumn{3}{c}{$n = 80$} & \multicolumn{3}{c}{$n = 100$}\\
		\cmidrule(lr){2-4}
		\cmidrule(lr){5-7}
		$v_i$ & \multicolumn{1}{c}{tB\&P} & \multicolumn{1}{c}{CGH} & \multicolumn{1}{c}{PS} & \multicolumn{1}{c}{tB\&P} & \multicolumn{1}{c}{CGH} & \multicolumn{1}{c}{PS}  \\
		\midrule
		$[1, 10]$ & 1.1070 & 1.1020 & \bf{1.1007} & 1.1021 & \bf{1.0987} & 1.0989 \\
		$[2, 4]$ & 1.0508 & 1.0498 & \bf{1.0465} & 1.0449 & 1.0445 & \bf{1.0409} \\
		$[4, 8]$ & \bf{1.0332} & 1.0342 & 1.0396 & 1.0521 & \bf{1.0497} & 1.0584 \\
		\midrule
		\text{average} & 1.0637 & \bf{1.0620} & 1.0622& 1.0664&	\bf{1.0643}	&1.0661
		 \\		
	\bottomrule[1pt]
	\end{tabular}
	\label{table:Heu_m=10}
	\end{threeparttable}
\end{table}%

\begin{table}
	\centering
	\begin{threeparttable}
	\caption{Ratio $(\text{OBJ}-\text{LBLP})/\text{LBLP} \times 100\%$ for tB\&P, CGH, PS, and CP  for  H2017 with $n \in \{80, 100\}$}
	\begin{tabular}{ccRRRRRRRR}
		\toprule[1pt]
		& & \multicolumn{4}{c}{$n = 80$} & \multicolumn{4}{c}{$n = 100$}\\
		\cmidrule(lr){3-6}
		\cmidrule(lr){7-10}
		$m$ & $v_i$ & \multicolumn{1}{c}{tB\&P} & \multicolumn{1}{c}{CGH} & \multicolumn{1}{c}{PS} & \multicolumn{1}{c}{CP} & \multicolumn{1}{c}{tB\&P} & \multicolumn{1}{c}{CGH} & \multicolumn{1}{c}{PS} & \multicolumn{1}{c}{CP} \\
		\midrule
		3 & [1, 50] & 1.54\% & 1.65\% & \bf{0.95\%} & 1.22\% & 1.12\% & 0.91\% & \bf{0.80\%} & 1.16\% \\
		5 & [1, 50] & 1.43\% & 2.01\% & 1.37\% & \bf{1.28\%} & 1.21\% & 1.48\% & 1.44\% & \bf{1.09\%} \\
		\midrule
		\text{average} && 1.49\% & 1.83\% & \bf{1.16\%} & 1.25\% &1.16\% &1.19\% &\bf{1.12\%} &1.13\%\\
		\bottomrule[1pt]
	\end{tabular}%
	\label{table:Heu_CP}
	\end{threeparttable}
\end{table}

%%%%%%%%%%%%%%%%%%%%%%%%%%%%%%%%%%%%%%%%%%%%%%%%%%%%%%%%%%%%%%%%%%%%%%%%%%
\section{Conclusions}
\label{sec:Conclusions}

In this paper, we present new models and algorithms for the BLSP, which allow us to solve medium-sized and large instances to optimality. Specifically, the proposed SPF performs best, while the B\&P cannot equalize the performance of SPF\@. One possible direction for future research is to explore how one can improve the performance of B\&P by examining different branching strategies or by introducing heuristics to solve the pricing problem more efficiently.
The TIF and SPF can also be extended to other scheduling problems of batch processing machines with incompatible families and other objectives (e.g., total weighted tardiness). %And the method to generate all of batches in SPF can be applied to problems with non-identical job sizes, where we can generate all of the batches firstly and input them to other solution procedures.
We also find that, on average, our proximity search heuristic performs better than previously proposed heuristics from the literature.
We hope that the findings of this paper can contribute to increasing the popularity of and progress in the scheduling of batch processing machines.
%%%%%%%%%%%%%%%%%%%%%%%%%%%%%%%%%%%%%%%%%%%%%%%%%%%%%%%%%%%%%%%%%%%%%%%%%%%%%
\section*{Acknowledgments}
Fan Yang is funded  by the China Scholarship Council.
Ben Hermans is funded by a Postdoctoral Fellowship of the Research Foundation -- Flanders (Fonds Wetenschappelijk Onderzoek). This work is partly supported by the National Natural Science Foundation of China 71801013 and the Beijing Social Science Foundation 18GLC078.

%%%%%%%%%%%%%%%%%%%%%%%%%%%%%%%%%%%%%%%%%%%%%%%%
\bibliographystyle{apa}\label{sec:Reference}
%\bibliography{Reference}

%%%%%%%%%%%%%%%%%%%%%%%%%%%%%%%%%%%%%%%%%%%%%%%%%%%%%%%%%%%%%%%%%%%%%

\begin{appendices}

\section{A time-indexed formulation with big-M constraints}
\label{Appendix:TIF2017}

We describe an adapted  time-indexed formulation with `big-M' type constraints for the BLSP based on~\cite{cakici2013} and~\cite{ham2017}.
We use the definitions of $H$, $H_j$ and $\Omega$ as before.
Let $\Gamma_j$ be the batch set for family $j\in \mathcal{F}$, containing $|N_j|$ batches that can be loaded by jobs of family $j$.  The sets $\Gamma_j$ constitute a partition of $\Omega$, meaning that $\cup_{j=1}^m \Gamma_j=\Omega$ and $\Gamma_i \cap
\Gamma_j = \emptyset$ for all $\{i,j\}\subset \Omega$.  %. It means that job $i\in N_j$ can be assigned to a batch $k\in \Gamma_j$, and
For any $j\in \mathcal{F}$, we introduce binary variables $x_{kt}$ that are one if batch $k\in \Gamma_j$ starts processing at time period $t \in H_j$, and zero otherwise,
and we also use variables~$y_{ik}$ that equal one if job $i\in N_j$ is assigned to batch $k\in \Gamma_j$, and zero otherwise. Finally, $C_i$ is a non-negative continuous variable that represents the completion time of job $i$, for $i\in N$.
\begin{align}
	\label{obj:TIF2017}
	\min \ &\displaystyle \sum_{i\in N} w_iC_i\\
	s.t. \ &\displaystyle \sum_{k\in \Gamma_j}y_{ik}= 1,
	&&\forall i \in N_j,j\in \mathcal{F},\label{constraint_01_TIF}\\
	&\displaystyle \sum_{i\in N_j} v_i y_{ik} \leq V,
	&&k\in \Gamma_j,j\in \mathcal{F},\label{constraint_02_TIF}\\
	& \sum_{t \in H_j} x_{kt} =1,
	&& k\in \Gamma_j, j\in \mathcal{F},\label{constraint_03_TIF}\\
	&\sum_{j \in \cal{F}}\sum_{k \in \Gamma_j} \sum_{\tau=\max\{t-q_j+1,1\}}^{\min\{t, |H_j|\}}
	x_{k\tau} \leq 1,
	&&t\in H,\label{constraint_04_TIF}\\
	&C_i \geq \sum_{t \in H_j} (t+q_j-1)x_{kt}-M(1-y_{ik}),
	&&i \in N_j, k\in \Gamma_j, j\in \mathcal{F}.\label{constraint_06_TIF}
\end{align}

The objective is to minimize the total weighted completion time.
Constraints~(\ref{constraint_01_TIF}) ensure that each job is assigned to one batch.
Constraints~(\ref{constraint_02_TIF}) impose the machine capacity on each batch.
Constraints~(\ref{constraint_03_TIF}) schedule each batch exactly once.
Constraints~(\ref{constraint_04_TIF}) guarantee that the machine will handle at most one batch at each time period.
Constraints~(\ref{constraint_06_TIF}) compute the job completion times, with $M$ a suitably large number.

%%%%%%%%%%%%%%%%%%%%%%%%%%%%%%%%%%%%%%%%%%%%%%%%%%%%%%%%%%%%%%%%%%%%%%%%%%%%

%%%%%%%%%%%%%%%%%%%%%%%%%%%%%%%%%%%%%%%%%%%%%%%%%%%%%%%%%%%%%%%%%%%%%%%%%%%%
\section{Batch generation}
\label{Appendix:batch generation}
We present a method to generate all feasible batches per family (that is, those that respect the capacity constraints), which is loosely based on \cite{stork2001}'s method for generating minimal forbidden sets in resource-constrained project scheduling.  For a given family~$j$, we enumerate all feasible subsets of $N_j$ by means of a tree $T$, where each node $w$ of $T$ corresponds to a job $i\in N_j$, except for the root node. If  node $w$ corresponds to job $i$ then it will have a potential child node for each job $i'=i+1,\ldots, |N_j|$. The root node has $|N_j|$ potential child nodes, corresponding to $|N_j|$ jobs. Each node~$w$ defines a batch $s \in S_j$ that is obtained by traversing the tree from node $w$ to the root, such that the batch $s$ includes all jobs associated with the nodes along the path.

During the construction of the tree, we prune a node as soon as the total size of the jobs in the corresponding batch violates the machine capacity. The enumeration uses a depth-first strategy.  Upon creating a node, we first check the capacity constraint.  If it is violated then we prune, otherwise the batch is feasible and we branch into the child nodes. If a node is feasible and has no further descendants, it is a leaf node.

We refer to this procedure as BG, and provide a pseudocode description in Algorithm~\ref{algorithm:Stork}. Here, let $x_i=1$ if the batch~$s$ includes job~$i\in N_j$, 0 otherwise. The value $V_s$ keeps track of the total size of all jobs in batch~$s$. We initialize $x_k=0$ for all  $k\in N_j$, $i=1$ and $V_s=0$.
An example is included in Figure~\ref{fig:StorkExample} for job set $N_j=\{1,2,3,4\}$, assuming job size $v_k=k$ for $k\in N_j$ (i.e., job $1$ has $v_1=1$) and the machine capacity $V = 5$.
\begin{algorithm}
	%\setstretch{0.8}
	\caption{BG}
	\begin{algorithmic}[1]
		\If {$i > |N_j|$}
		\State Create new pattern $s$
		\For {$k \in N_j$}
		\If{$x_k=1$}
		
		\State add job $k$ to $s$
		\EndIf
		\State\textbf{ end if}
		\EndFor
		\State\textbf{ end for}
		\If{$s$ is not empty}
		\State add $s$ to $S_j$
		\EndIf
		\State\textbf{ end if}
		\Else{
			\If{$V_s + v_i> V$}
			\State $x_i=0$
			\State \text{BG($i+1$, $V_s$)}
			\Else{
				\State $x_i=1$
				\State $V_s=V_s+ v_i$
				\State \text{BG($i+1$, $V_s$)}
				\State $x_i=0$
				\State $V_s=V_s - v_i$
				\State \text{BG($i+1$, $V_s$)}
				\EndIf
				\State\textbf{ end if}
			}
		}
		\EndIf
		\State\textbf{ \quad end if}
	\end{algorithmic}
	\label{algorithm:Stork}
\end{algorithm}

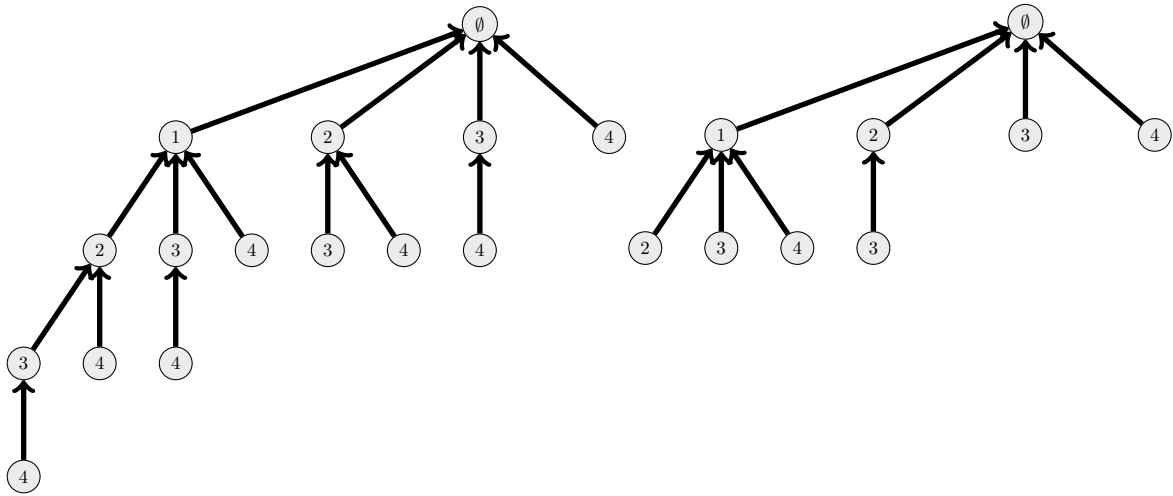
\begin{figure}
	 \begin{subfigure}{0.45\textwidth}
			\begin{tikzpicture}
				[every node/.style={scale=0.7,draw,shape=circle,fill=gray!15}]
				\node (n0) at (0, 0) {$\emptyset$};
				%children of root node
				\node (n1) at (-4, -1.5) {1};
				\node (n2) at (-2, -1.5) {2};
				\node (n3) at (0, -1.5) {3};
				\node (n4) at (1.7, -1.5) {4};
				\draw[->,line width=2pt] (n1) to (n0);
				\draw[->,line width=2pt] (n2) to (n0);
				\draw[->,line width=2pt] (n3) to (n0);
				\draw[->,line width=2pt] (n4) to (n0);	
				%children of node n1
				\node (n5) at (-5,-3) {2};
				\node (n6) at (-4,-3) {3};
				\node (n7) at (-3,-3) {4};
				\draw[->,line width=2pt] (n5) to (n1);
				\draw[->,line width=2pt] (n6) to (n1);
				\draw[->,line width=2pt] (n7) to (n1);
				%children of node n5
				\node (n8) at (-6,-4.5) {3};
				\node (n9) at (-5,-4.5) {4};
				\draw[->,line width=2pt] (n8) to (n5);
				\draw[->,line width=2pt] (n9) to (n5);
				%children of node n6
				\node (n10) at (-4,-4.5) {4};
				\draw[->,line width=2pt] (n10) to (n6);
				%children of node n8
				\node (n14) at (-6,-6) {4};
				\draw[->,line width=2pt] (n14) to (n8);
				%children of node n2
				\node (n11) at (-2,-3) {3};
				\node (n12) at (-1,-3) {4};
				\draw[->,line width=2pt] (n11) to (n2);
				\draw[->,line width=2pt] (n12) to (n2);
				%children of node n3
				\node (n13) at (0,-3) {4};
				\draw[->,line width=2pt] (n13) to (n3);
			\end{tikzpicture}
	\caption{The complete tree for $N_j$ before pruning}
	\end{subfigure}
	%\hfill
	\begin{subfigure}{0.45\textwidth}
			\begin{tikzpicture}
				[stay/.style={scale=0.7,draw,shape=circle,fill=gray!15},
				deleted/.style={scale=0.7}
				]
				\node[stay] (n0) at (0, 0) {$\emptyset$};
				%children of root node
				\node[stay] (n1) at (-4, -1.5) {1};
				\node[stay] (n2) at (-2, -1.5) {2};
				\node[stay] (n3) at (0, -1.5) {3};
				\node[stay] (n4) at (1.7, -1.5) {4};
				\draw[->,line width=2pt] (n1) to (n0);
				\draw[->,line width=2pt] (n2) to (n0);
				\draw[->,line width=2pt] (n3) to (n0);
				\draw[->,line width=2pt] (n4) to (n0);
				%children of node n1
				\node[stay] (n5) at (-5,-3) {2};
				\node[stay] (n6) at (-4,-3) {3};
				\node[stay] (n7) at (-3,-3) {4};
				\draw[->,line width=2pt] (n5) to (n1);
				\draw[->,line width=2pt] (n6) to (n1);
				\draw[->,line width=2pt] (n7) to (n1);

				\node[stay] (n11) at (-2,-3) {3};
			
				\draw[->,line width=2pt] (n11) to (n2);

				\node[deleted] (n14) at (-6,-6.2) {};
				%\draw[->,line width=2pt] (n13) to (n3);
			\end{tikzpicture}
	\caption{The tree of all feasible batches}
	\end{subfigure}
	\caption{Trees for the example}
	\label{fig:StorkExample}
\end{figure}

%%%%%%%%%%%%%%%%%%%%%%%%%%%%%%%%%%%%%%%%%%%%%%%%%

%\section{CP model}
%\label{Appendix:CP model}
%This appendix introduces a CP model by following~\cite{ham2017}, as follows.
%\begin{align}
%&Interval  &&x_i,\ size\ p_i,\ \forall i\in N,\label{cons:CP01}  \\
%&State && b,\label{cons:CP02}\\
%&Cumul &&batchSize_j=\sum_{i\in N_j}pulse(x_i,\ v_i), \ \forall j\in \mathcal{F},\label{cons:CP03}\\
%&Objective && \min \sum_{i\in N}(w_i)endOf(x_i),\label{cons:CP04} \\
%&Constraint &&AlwaysEqual(b,\ x_i,\ \phi_i), \ \forall i\in N,\label{cons:CP05}\\
%&Constraint &&batchSize_j \leq V, \ \forall j\in \mathcal{F}.\label{cons:CP06}
%\end{align}
%Statements~\eqref{cons:CP01} and~\eqref{cons:CP02} declare the $interval$ variables for jobs and the $state$ function for family incompatibility, respectively.
%Statements~\eqref{cons:CP03} give the $Cumul$ function to decide the total size of a batch.
%Statement~\eqref{cons:CP04} proposes the objective function that minimizes the total weighted completion time.
%Constraints~\eqref{cons:CP05} ensure the incompatibility that only jobs from the same family can be handled together.
%Constraints~\eqref{cons:CP06} guarantee the machine capacity is respected.

%%%%%%%%%%%%%%%%%%%%%%%%%%%%%%%%%%%%%%%%%%%%%%%%%%%%%%%%

\section{Computational performance of TIF, SPF, and B\&P on K2008 and K2008u instances}
\label{Appendix:Experimental data}

The results are in the Tables~\ref{table:Non-unit}-\ref{table:Unit}.

\begin{landscape}
\begin{table}
	\begin{threeparttable}
	\centering
	\small
	\setlength\tabcolsep{3pt}
	\caption{Computational performance of TIF, SPF, and B\&P on K2008 instances with $n \in \{40, 60, 80\}$; `time' indicates the average CPU times in seconds, `opt' the number of solved instances (out of 10) within the 1200-seconds time limit, and `nodes' the average number of nodes computed by B\&P. In the `opt'-columns, the numbers in brackets indicate the number of instances that cannot be solved because of memory problems, or because the time required to build the model exceeds the time limit.}
	\begin{tabular}{cc*{21}{R}}
		\toprule[1pt]
		& & \multicolumn{7}{c}{$n = 40$} &  \multicolumn{7}{c}{$n = 60$} &  \multicolumn{7}{c}{$n = 80$} \\ \cmidrule(lr){3-9} \cmidrule(lr){10-16} \cmidrule(lr){17-23}
		& & \multicolumn{2}{c}{TIF} & \multicolumn{2}{c}{SPF} & \multicolumn{3}{c}{B\&P} & \multicolumn{2}{c}{TIF} & \multicolumn{2}{c}{SPF} & \multicolumn{3}{c}{B\&P} &  \multicolumn{2}{c}{TIF} & \multicolumn{2}{c}{SPF} & \multicolumn{3}{c}{B\&P} \\
		\cmidrule(lr){3-4}
		\cmidrule(lr){5-6}
		\cmidrule(lr){7-9}
		\cmidrule(lr){10-11}
		\cmidrule(lr){12-13}
		\cmidrule(lr){14-16}
		\cmidrule(lr){17-18}
		\cmidrule(lr){19-20}
		\cmidrule(lr){21-23}
		\multicolumn{1}{c}{$m$} & \multicolumn{1}{c}{$v_i$}
		& \multicolumn{1}{c}{time} & \multicolumn{1}{c}{opt} & \multicolumn{1}{c}{time} & \multicolumn{1}{c}{opt} & \multicolumn{1}{c}{time} & \multicolumn{1}{c}{opt} & \multicolumn{1}{c}{nodes}
		& \multicolumn{1}{c}{time} & \multicolumn{1}{c}{opt} & \multicolumn{1}{c}{time} & \multicolumn{1}{c}{opt} & \multicolumn{1}{c}{time} & \multicolumn{1}{c}{opt} & \multicolumn{1}{c}{nodes}
		& \multicolumn{1}{c}{time} & \multicolumn{1}{c}{opt} & \multicolumn{1}{c}{time} & \multicolumn{1}{c}{opt} & \multicolumn{1}{c}{time} & \multicolumn{1}{c}{opt} & \multicolumn{1}{c}{nodes}\\
		\midrule
		% n = 40
		2 & [1, 10] & 373.05 & 7     & 50.11 & 10    & 31.87 & 9     & 107.67
		% n = 60
		& 692.57 & 2     & 399.95 & 9     & 415.48 & 6     & 418.17 &
		% n = 80
		\text{---} & 0     & \text{---} & 0(7)     & \text{---} & 0     & \text{---}\\
		% n = 40
		4 & [1, 10] & 449.12 & 3     & 68.26 & 10    & 69.19 & 10    & 80.80 &
		% n = 60		
		\text{---} & 0     & 455.03 & 7     & 335.07 & 5     & 181.80 &
		% n = 80
		\text{---} & 0     & 286.25 & 2     & 51.39 & 1     & 7.00 \\
		% n = 40
		6 & [1, 10] & 1181.51 & 1     & 52.28 & 10    & 68.50 & 10    & 29.00 &
		% n = 60
		\text{---} & 0     & 433.01 & 7     & 404.44 & 8     & 69.50 &
		% n = 80
		\text{---} & 0     & 856.81 & 2     & 556.66 & 2     & 39.00\\
		% n = 40
		10 & [1, 10] & \text{---} & 0     & 41.98 & 10    & 39.49 & 10    & 5.00 &
		% n = 60
		\text{---} & 0     & 349.12 & 10    & 271.12 & 9     & 14.11 &
		% n = 80
		\text{---} & 0     & 885.55 & 1     & 1024.56 & 1     & 19.00 \\
		\midrule
		% n = 40
		2 & [2, 4] & 84.71 & 10    & 163.41 & 10    & 244.20 & 3     & 1095.67 &
		% n = 60
		451.57 & 5     & \text{---} & 0(6)      & \text{---} & 0     & \text{---} &
		% n = 80
		\text{---} & 0     & \text{---} & 0(10)     & \text{---} & 0     & \text{---}  \\
		% n = 40
		4 & [2, 4] & 643.80 & 3     & 228.45 & 10    & 152.84 & 6     & 377.00 &
		% n = 60
		\text{---} & 0     & 1117.07 & 1     & \text{---} & 0     & \text{---} &
		% n = 80
		\text{---} & 0     & \text{---}& 0(5)     & \text{---} & 0     & \text{---} \\
		% n = 40
		6 & [2, 4] & \text{---} & 0     & 121.26 & 10    & 262.69& 9     & 281.22 & \text{---}&
		% n = 60
		0     & 447.32 & 3     & \text{---} & 0     &\text{---} &
		% n = 80
		\text{---} & 0     & \text{---} & 0     & \text{---} & 0     & \text{---} \\	
		% n = 40
		10 & [2, 4] & \text{---} & 0     & 36.25 & 10    & 144.47 & 9     & 60.33  &
		% n = 60
		\text{---} & 0     & 664.06 & 9     & 615.69 & 1     & 85.00 &
		% n = 80
		\text{---} & 0     & \text{---} & 0     & \text{---}& 0     & \text{---} \\\midrule
		% n = 40
		2 & [4, 8] & 334.77 & 7     & 17.90 & 10    & 118.61 & 10    & 433.60  &
		% n = 60
		1072.97 & 1     & 142.47 & 10    & 142.07 & 8     & 206.50 &
		% n = 80
		\text{---} & 0     & 509.66 & 6     &51.80 & 1     & 13.00 \\		
		% n = 40
		4 & [4, 8] & 364.50 & 2     & 76.69 & 10    & 40.18 & 10    & 43.00 &
		% n = 60
		\text{---} & 0     & 341.54 & 9     & 390.61 & 9     & 152.33 &
		% n = 80
		\text{---} & 0     & 475.61 & 8     & 544.49 & 6     & 111.67 \\
		% n = 40
		6 & [4, 8] & 504.73 & 3     & 29.07 & 10    & 22.87 & 10    & 8.00 &
		% n = 60
		\text{---} & 0     & 230.80 & 8     & 139.66 & 9     & 21.89 &
		% n = 80
		\text{---} & 0     & 280.36 & 1     & 599.89 & 4     & 39.00 \\
 		% n = 40
		10 & [4, 8] & \text{---} & 0     & 34.36 & 10    & 46.19 & 10    & 4.00 &
		% n = 60
		\text{---} & 0     & 289.80 & 7     & 324.47 & 10    & 11.80 &
		% n = 80
		\text{---} & 0     & 608.52 & 4     & 681.93 & 6     & 11.00 \\
		\midrule
		% n = 40
		\multicolumn{2}{l}{average/total} & 347.37 & 36    & 76.67 & 120   & 91.05 & 106   & 147.42 &
		% n = 60
		589.49 & 8     & 376.45 & 80    & 301.74 & 65    & 115.75
		% n = 80
		& \text{---} & 0     & 531.20 & 24    & 571.39 & 21    & 48.05 \\
		\bottomrule[1pt]
	\end{tabular}
	\label{table:Non-unit}
	\end{threeparttable}
\end{table}

\begin{table}[t]
	\centering
	\begin{threeparttable}
	\small
	\setlength\tabcolsep{3pt}
	\caption{Computational performance of TIF, SPF, and B\&P on K2008u instances; `time' indicates the average CPU times in seconds, `opt' the number of solved instances (out of 10) within the 1200-seconds time limit, and `nodes' the average number of nodes computed by B\&P. In the `opt'-columns, the numbers in brackets indicate the number of instances that cannot be solved because  of memory problems, or because the time required to build the model exceeds the time limit.}
	\begin{tabular}{ccRRRRRRRRRRRRRR}
		\toprule[1pt]
		& & \multicolumn{7}{c}{$n = 120$} &  \multicolumn{7}{c}{$n = 150$} \\ \cmidrule(lr){3-9} \cmidrule(lr){10-16}
		& & \multicolumn{2}{c}{TIF} & \multicolumn{2}{c}{SPF} & \multicolumn{3}{c}{B\&P}  & \multicolumn{2}{c}{TIF} & \multicolumn{2}{c}{SPF} & \multicolumn{3}{c}{B\&P} \\
		\cmidrule(lr){3-4}
		\cmidrule(lr){5-6}
		\cmidrule(lr){7-9}
		\cmidrule(lr){10-11}
		\cmidrule(lr){12-13}
		\cmidrule(lr){14-16}
		$m$ & $v_i$ & \multicolumn{1}{c}{time} & \multicolumn{1}{c}{opt} & \multicolumn{1}{c}{time} & \multicolumn{1}{c}{opt} & \multicolumn{1}{c}{time} & \multicolumn{1}{c}{opt} & \multicolumn{1}{c}{nodes} & \multicolumn{1}{c}{time} & \multicolumn{1}{c}{opt} & \multicolumn{1}{c}{time} & \multicolumn{1}{c}{opt} & \multicolumn{1}{c}{time} & \multicolumn{1}{c}{opt} & \multicolumn{1}{c}{nodes} \\
		\midrule
		2 & [1, 10] & 742.09 & 3     & 274.06 & 8(1)     & \text{---} & 0     & \text{---} & 65.80 & 1     & 379.24 & 1(5)     & \text{---} & 0     & \text{---} \\
		4 & [1, 10] & 48.85 & 1     & 74.43 & 10    & 78.82 & 2     & 264.00 & \text{---} & 0     & 137.24 & 10    & \text{---} & 0     & \text{---} \\
		6 & [1, 10] & \text{---} & 0     & 26.72 & 10    & 211.59 & 3     & 1651.00 & \text{---} & 0     & 76.06 & 10    & 947.03 & 1     & 3059.00 \\
		10 & [1, 10] & 931.69 & 1     & 4.07  & 10    & 160.59 & 6     & 1699.33 & 115.29 & 1     & 15.87 & 10    & 447.79 & 3     & 2540.33 \\ \midrule
		2 & [2, 4] & 519.24 & 5     & 685.39 & 1     & \text{---} & 0     &\text{---} & 55.93 & 1     & \text{---} & 0(10)     & \text{---}& 0     & \text{---} \\
		4 & [2, 4] & 252.00 & 1     & 377.74 & 9     & \text{---} & 0     & \text{---} & \text{---} & 0     & 845.90 & 1     & \text{---} & 0     & \text{---} \\
		6 & [2, 4] & \text{---} & 0     & 139.55 & 10    & \text{---}& 0     & \text{---} & \text{---}& 0     & 347.35 & 3     & \text{---} & 0     & \text{---} \\
		10 & [2, 4] & \text{---}& 0     & 22.61 & 10    & \text{---} & 0     & \text{---} & \text{---} & 0     & 74.46 & 10    & \text{---} & 0     & \text{---} \\ \midrule
		2 & [4, 8] & 187.43 & 6     & 13.83 & 10    & 210.53 & 2     & 1392.00 & \text{---} & 0     & 11.46 & 10    & \text{---} & 0     & \text{---}\\
		4 & [4, 8] & \text{---} & 0     & 4.92  & 10    & 195.54 & 3     & 2269.67 &\text{---} & 0     & 9.28  & 10    & 236.25 & 4     & 1929.00 \\
		6 & [4, 8] & 323.92 & 2     & 2.24  & 10    & 107.13 & 7     & 1439.57 & \text{---} & 0     & 8.07  & 10    & 100.41 & 3     & 1082.33 \\
		10 & [4, 8] & 1029.56 & 1     & 3.37  & 10    & 446.13 & 9     & 7014.56 & \text{---} & 0     & 5.94  & 10    & 290.14 & 6     & 3152.33 \\
		\midrule
		\multicolumn{2}{c}{average/total} & 442.85 & 20    & 85.14 & 108   & 235.27 & 32    & 3077.44  & 79.01 & 3     & 66.48 & 85    & 310.44 & 17    & 2385.71 \\
		\bottomrule[1pt]
	\end{tabular}
	\label{table:Unit}
	\end{threeparttable}
\end{table}

\end{landscape}
\end{appendices}

\end{document}